\documentclass[journal]{IEEEtran}

% *** CITATION PACKAGES ***
%
%\usepackage{cite}
% cite.sty was written by Donald Arseneau
% V1.6 and later of IEEEtran pre-defines the format of the cite.sty package
% \cite{} output to follow that of IEEE. Loading the cite package will
% result in citation numbers being automatically sorted and properly
% "compressed/ranged". e.g., [1], [9], [2], [7], [5], [6] without using
% cite.sty will become [1], [2], [5]--[7], [9] using cite.sty. cite.sty's
% \cite will automatically add leading space, if needed. Use cite.sty's
% noadjust option (cite.sty V3.8 and later) if you want to turn this off
% such as if a citation ever needs to be enclosed in parenthesis.
% cite.sty is already installed on most LaTeX systems. Be sure and use
% version 5.0 (2009-03-20) and later if using hyperref.sty.
% The latest version can be obtained at:
% http://www.ctan.org/tex-archive/macros/latex/contrib/cite/
% The documentation is contained in the cite.sty file itself.

% *** GRAPHICS RELATED PACKAGES ***
%
\ifCLASSINFOpdf
  % \usepackage[pdftex]{graphicx}
  % declare the path(s) where your graphic files are
  % \graphicspath{{../pdf/}{../jpeg/}}
  % and their extensions so you won't have to specify these with
  % every instance of \includegraphics
  % \DeclareGraphicsExtensions{.pdf,.jpeg,.png}
\else
  % or other class option (dvipsone, dvipdf, if not using dvips). graphicx
  % will default to the driver specified in the system graphics.cfg if no
  % driver is specified.
  % \usepackage[dvips]{graphicx}
  % declare the path(s) where your graphic files are
  % \graphicspath{{../eps/}}
  % and their extensions so you won't have to specify these with
  % every instance of \includegraphics
  % \DeclareGraphicsExtensions{.eps}
\fi
% *** MATH PACKAGES ***
%
\usepackage[cmex10]{amsmath}
% A popular package from the American Mathematical Society that provides
% many useful and powerful commands for dealing with mathematics. If using
% it, be sure to load this package with the cmex10 option to ensure that
% only type 1 fonts will utilized at all point sizes. Without this option,
% it is possible that some math symbols, particularly those within
% footnotes, will be rendered in bitmap form which will result in a
% document that can not be IEEE Xplore compliant!
%
% Also, note that the amsmath package sets \interdisplaylinepenalty to 10000
% thus preventing page breaks from occurring within multiline equations. Use:
%\interdisplaylinepenalty=2500
% after loading amsmath to restore such page breaks as IEEEtran.cls normally
% does. amsmath.sty is already installed on most LaTeX systems. The latest
% version and documentation can be obtained at:
% http://www.ctan.org/tex-archive/macros/latex/required/amslatex/math/
%\usepackage{amsmath}
\usepackage{paralist}
\usepackage{amsfonts}
\usepackage{amssymb}
\usepackage{array}
\usepackage{amsthm}
\newtheorem{theorem}{Theorem}
\newtheorem{lemma}[theorem]{Lemma}
\theoremstyle{definition}
\newtheorem{definition}{Definition}
\theoremstyle{remark}
\newtheorem*{remark}{Remark}
%%%%%%%%%%%%%%%%%%%%%%%%%%%%%%%%%%%%%%%%%%%%%%
%\usepackage{amsthm}
%\newtheorem{theorem}{Theorem}
%%\newtheorem{lemma}[theorem]{Lemma}
\usepackage{mathtools, cuted}
\usepackage{lipsum}
\newcommand{\E}{\mathbb{E}}
\newcommand{\ma}{\max\{(c(x,a)-z),0\}}

\newcommand{\armz}{{\underset{z \in \mathbb{R}}{\text{argmin}}}}
\newcommand{\lia}{{\underset{\Delta z \rightarrow 0}{\lim}}}
\newcommand{\sul}{{\overset{l-1}{\underset{i=1}{\sum}}}}
\newcommand{\sk}{{\overset{k-1}{\underset{i=1}{\sum}}}}
\newcommand{\skl}{{\overset{k-1}{\underset{i=l}{\sum}}}}
\newcommand{\skx}{{\overset{X}{\underset{i=k+1}{\sum}}}}
\newcommand{\slx}{{\overset{X}{\underset{i=l+1}{\sum}}}}
\newcommand{\cvr}{\text{CVaR}_{\alpha}(c(x_{k},a))}
\newcommand{\Prb}{\mathbb{P}}
\newcommand{\Prby}{\mathbb{P}_{y}}
\newcommand{\Prbyy}{\mathbb{P}_{y+1}}
\usepackage{comment}
\usepackage{url}
%%%%%%%%%%%%%%%%%%%%%%%%%%%%%%%%%%%%%%%%%%%%%%

%\makeatletter
%\def\munderbar#1{\underline{\sbox\tw@{$#1$}\dp\tw@\z@\box\tw@}}
%\makeatother
\usepackage{cite}
\usepackage{accents}
% *** SPECIALIZED LIST PACKAGES ***
%
%\usepackage{algorithmic}
% algorithmic.sty was written by Peter Williams and Rogerio Brito.
% This package provides an algorithmic environment fo describing algorithms.
% You can use the algorithmic environment in-text or within a figure
% environment to provide for a floating algorithm. Do NOT use the algorithm
% floating environment provided by algorithm.sty (by the same authors) or
% algorithm2e.sty (by Christophe Fiorio) as IEEE does not use dedicated
% algorithm float types and packages that provide these will not provide
% correct IEEE style captions. The latest version and documentation of
% algorithmic.sty can be obtained at:
% http://www.ctan.org/tex-archive/macros/latex/contrib/algorithms/
% There is also a support site at:
% http://algorithms.berlios.de/index.html
% Also of interest may be the (relatively newer and more customizable)
% algorithmicx.sty package by Szasz Janos:
% http://www.ctan.org/tex-archive/macros/latex/contrib/algorithmicx/

% *** ALIGNMENT PACKAGES ***
%
\usepackage{array}

\begin{document}

% paper title
% Titles are generally capitalized except for words such as a, an, and, as,
% at, but, by, for, in, nor, of, on, or, the, to and up, which are usually
% not capitalized unless they are the first or last word of the title.
% Linebreaks \\ can be used within to get better formatting as desired.
% Do not put math or special symbols in the title.
\title{Sequential Detection of Market shocks using Risk-averse Agent Based Models}
%
%
% author names and IEEE memberships
% note positions of commas and nonbreaking spaces ( ~ ) LaTeX will not break
% a structure at a ~ so this keeps an author's name from being broken across
% two lines.
% use \thanks{} to gain access to the first footnote area
% a separate \thanks must be used for each paragraph as LaTeX2e's \thanks
% was not built to handle multiple paragraphs
%
\author{Vikram Krishnamurthy, {\em Fellow, IEEE}  and Sujay Bhatt
\thanks{The authors are with the Department of Electrical and Computer Engineering at the University of British Columbia, Vancouver, V6T 1Z4, Canada.
email: {\tt vikramk@ece.ubc.ca}  and  {\tt sujaybhatt@ece.ubc.ca} }}
\maketitle

% As a general rule, do not put math, special symbols or citations
% in the abstract or keywords.
\begin{abstract}
This paper considers a  statistical signal processing problem involving agent based models of financial markets which at a micro-level are driven by socially aware and risk-averse trading agents. These agents trade (buy or sell) stocks by exploiting information about the decisions
of  previous agents (social learning) via an order book in addition to a private (noisy) signal they receive on the value of the stock. We are interested in the following: (1) Modelling the dynamics of these risk averse agents, (2) Sequential detection of a market shock based on the behaviour of these agents. 

Structural results which characterize social learning under a risk measure, CVaR (Conditional Value-at-risk), are presented and formulation of the Bayesian change point detection problem is provided. The structural results exhibit two interesting properties: (i) Risk averse agents herd more often than risk neutral agents (ii) The stopping set in the sequential detection problem is non-convex. The framework is validated on data from the Yahoo! Tech Buzz game dataset and it is revealed that (a) The model identifies the value changes based on agent's trading decisions. (b) Reasonable quickest detection performance is achieved when the agents are risk-averse.  
\end{abstract}

% Note that keywords are not normally used for peerreview papers.
\begin{IEEEkeywords}
conditional value at risk (CVaR), social learning filter, market shock, quickest detection, agent based models, monotone Bayesian update
\end{IEEEkeywords}

% For peer review papers, you can put extra information on the cover
% page as needed:
% \ifCLASSOPTIONpeerreview
% \begin{center} \bfseries EDICS Category: 3-BBND \end{center}
% \fi
%
% For peerreview papers, this IEEEtran command inserts a page break and
% creates the second title. It will be ignored for other modes.
%\IEEEpeerreviewmaketitle

\section{Introduction} \label{sec:intro}
% The very first letter is a 2 line initial drop letter followed
% by the rest of the first word in caps.
% 
% form to use if the first word consists of a single letter:
% \IEEEPARstart{A}{demo} file is ....
% 
% form to use if you need the single drop letter followed by
% normal text (unknown if ever used by IEEE):
% \IEEEPARstart{A}{}demo file is ....
% 
% Some journals put the first two words in caps:
% \IEEEPARstart{T}{his demo} file is ....
% 
% Here we have the typical use of a "T" for an initial drop letter
% and "HIS" in caps to complete the first word.

Financial markets evolve based on the behaviour of a large number of interacting entities. Understanding the interaction of these agents is therefore essential in statistical inference from financial data. This motivates the study of ``agent based models'' for financial markets. 
Agent based models  are  useful for capturing the global behaviour of highly interconnected financial systems by simulating the behaviour of the local interacting systems \cite{Leb06}, \cite{Leb00}, \cite{SZSL07}, \cite{ACPZ09}. Unlike standard economic models which emphasize the equilibrium properties of financial markets, agent based models stress local interactions and out-of-equilibrium dynamics that may not reach equilibrium in the long run \cite{TJ06}. Agent based models  are commonly used to determine the conditions that lead a group of interacting agents to form an aggregate behaviour \cite{CB00}, \cite{AZ98}, \cite{PS11}, \cite{Cha04} and to model stylized facts like correlation of returns and volatility clustering \cite{Con07}, \cite{LM00}. Agent based models  have also been used model anomalies that the standard approaches fail to explain like ``fat tails", absence of simple arbitrage, gain/loss asymmetry and leverage effects \cite{CMZ13}, \cite{CPZ11}. 

In this paper, we are interested in developing agent based models for studying global events in financial markets where the underlying value of the stock experiences a jump change (shock). Market shocks are known to affect stock market returns \cite{AM09}, cause fluctuations in the economy \cite{GYZ09} and necessitate market making \cite{DM09}. Therefore detecting shocks is essential and when the interacting agents are acting based on private signals and complete history of other agents' trading decisions, it is non-trivial \cite{Kri12}.    

  The problem of market shock detection in the presence of social learning considered in this paper is different from a standard signal processing (SP) problem in the following ways:
\begin{compactenum}
\item Agents (or social sensors) influence the behaviour of other agents, whereas in standard SP sensors typically do not affect other sensors.
\item Agents  reveal quantized information (decisions) and have dynamics, whereas in standard SP sensors are static with the dynamics modelled
in the state equation.
\item Standard SP is expectation centric. In this paper  we use  {\em coherent risk measures} which generalizes
the concept of expected value and is much more relevant in financial applications. Such coherent risk measures \cite{ADEH02} are now widely used in finance to model risk averse behaviour.
\end{compactenum}

Properties 1 and 2 above are captured by social learning models. Such social learning models,
 where agents face \textit{fixed} prices, are  considered in \cite{BHW92}, \cite{Wel92}, \cite{Ban92}, \cite{Cha04}. They show that after a finite amount of time,  an informational cascade takes place and all subsequent agents choose the same action regardless of their private signal. Models where agents act sequentially to optimize local costs (to choose an action) and are socially aware were considered in \cite{AZ98}, \cite{Glo89}. This paper considers a similar model, but, 
in order to incorporate property 3 above (risk averse behaviour), we will replace the classical
social learning model of expected cost minimizers to that of risk averse minimizers.
  The resulting risk-averse social learning filter has several interesting (and unusual) properties that will
 be discussed in the paper.

\subsection*{Main Results and Organization}
 Section \ref{sec:Model} presents the social learning agent based model and the market observer's objective for detecting shocks. The formulation involves the interaction
 of local and global decision makers.  Individual agents perform social learning and the market maker seeks to determine if the underlying asset value
 has changed based on the agent behaviour. The shock in the asset value changes at a phase distributed time (which generalizes
 geometric distributed change times).  The problem of market shock detection considered in this paper is different from the classical Bayesian quickest detection \cite{SA07}, \cite{PH08}, \cite{Fri09} where, local observations are used to detect the change. Quickest detection in the presence of social learning was considered in \cite{Kri12} where it was shown that making global decisions (stop or continue) based on local decisions (buy or sell) leads to discontinuous value function and the optimal policy has multiple thresholds. However, unlike \cite{Kri12} which deals with expected cost, we consider a more general measure to account for the local agents' attitude towards risk.  

It is well documented in various fields like economics \cite{CLLS75}, behavioural economics, psychology \cite{DS99} that people prefer a certain but possibly less desirable outcome over an uncertain but potentially larger outcome. To model this risk averse behaviour, commonly used risk measures{\footnote{A risk measure $\varrho : \mathcal{L} \rightarrow \mathbb{R}$ is a mapping from the space of measurable functions to the real line which satisfies the following properties: (i) $\varrho(0)=0$. (ii) If $S_{1}, S_{2} \in \mathcal{L}$ and $S_{1} \leq S_{2} ~\text{a.s}$ then $\varrho(S_{1}) \leq \varrho(S_{2})$. (iii) if $a\in\mathbb{R}$ and $S\in\mathcal{L}$, then $\varrho(S+a) = \varrho(S)+a $. The risk measure is coherent if in addition $\varrho$ satisfies: (iv) If $S_{1}, S_{2} \in \mathcal{L}$, then $\varrho(S_{1}+S_{2}) \leq \varrho(S_{1}) + \varrho(S_{1})$. (v) If $a \geq 0$ and $S\in\mathcal{L}$, then $\varrho(aS)=a\varrho(S)$. The expectation operator is a special case where subadditivity is replaced by additivity.}} are Value-at-Risk (VaR), Conditional Value-at-Risk (CVaR), Entropic risk measure and Tail value at risk; see \cite{MJ10}. We consider social learning under CVaR risk measure. CVaR \cite{RU00} is an extension of VaR  that gives the total loss given a loss event and is a coherent risk measure \cite{ADEH02}. In this paper, we choose CVaR risk measure as it exhibits the following properties \cite{ADEH02}, \cite{RU00}:
(i) It associates higher risk with higher cost.
(ii) It ensures that risk is not a function of the quantity purchased, but arises from the stock.
(iii) It is convex. 
CVaR as a risk measure has been used in solving portfolio optimization problems \cite{PUK99}, \cite{LSU10} credit risk optimization \cite{AMRU01}, order execution \cite{FRP12} and also to optimize an insurer's product mix \cite{TWT10}. For an overview of risk measures and their application in finance, see \cite{MJ10}.  

Section \ref{sec:prop} provides structural results which characterize the social learning under CVaR risk measure and its properties.
We show that, under reasonable assumptions on the costs, the trading decisions taken by socially aware and risk-averse agents are ordinal functions of their private observations and monotone in the prior information. This implies that the Bayesian social learning follows simple intuitive rules. The change point detection problem is formulated as a Market Observer{\footnote{The market observer could be the securities dealer (investment bank or syndicate) that underwrites the stock which is later traded in a secondary market.}} seeking to detect a shock in the stock value (modelled as a Markov chain) by balancing the natural trade-off between detection delay and false alarm.
 
 Section  \ref{sec:disc} discusses the unusual properties exhibited by the CVaR social learning filter and explores the link between local and global behaviour in 
agent based models  for detection of market shocks. 
   We show that the stopping region
 for the sequential detection problem is non-convex; this is in contrast to  standard signal processing quickest detection problems where the stopping set is convex. Similar results were developed in \cite{Kri11,Kri12,KP14}.
 
 Finally, Section  \ref{sec:dat} discusses an application of the agent based model and change detection framework in a stock market data set.
 We use a data set  from Tech Buzz Game which is a stock market simulation launched by Yahoo! Research and O'Reilly Media 
 %on March 15, 2005 
 to gain insights into forecasting high-tech events and trades. Tech Buzz  uses Dynamic parimutuel markets (DPM) as its trading mechanism. DPMs are known to provide accurate predictions in field studies on  price formation in election stock markets \cite{FRR99}, mechanism design for sales forecasting \cite{PC02} and betting in sports markets \cite{TZ88}, \cite{GDBZ98}.

\section{CVaR Social Learning Model and Market Observer's objective} \label{sec:Model}
 This section presents the Bayesian social learning model and defines the objective of the market observer. As will be shown later
 in Section \ref{sec:prop}, the model results in ordinal decision making thereby mimicking human behavior
and the risk measure captures a trader's attitude towards risk.

\subsection{CVaR Social Learning Model}
 The market micro-structure is modelled as a discrete time dealer market motivated by algorithmic and high-frequency tick-by-tick trading \cite{CJ13}. There is a single traded stock or asset, a market observer and a countable number of trading agents. The asset has an initial true underlying value $x_{0} \in \mathcal{X} = \lbrace 1,2,\hdots, X \rbrace$. The market observer does not receive direct information about $x\in \mathcal{X}$ but only observes the public buy/sell actions of agents, $a_{k} \in \mathcal{A} = \lbrace 1(\text{buy}),2({\text{sell}}) \rbrace$. The agents themselves receive noisy private observations of the underlying value $x$ and consider this in addition to the trading decisions of the other agents visible in the order book \cite{AT15}, \cite{AS08}, \cite{KA12,KA13}. At a random time, $\tau^{0}$ determined by  the transition matrix $P$, the asset experiences a jump change in its value to a new value. The aim of the market observer is to detect the change time (global decision) with minimal cost, having access to only the actions of these socially aware agents. Let $y_{k} \in \mathcal{Y} = \{1,2, \hdots, Y\}$ denote agent $k$'s private observation. The initial distribution is $\pi_{0} = (\pi_{0}(i),i\in \mathcal{X})$ where $\pi_{0}(i) = \Prb(x_{0}=i)$. \\

The  agent based model has the following dynamics:
\begin{compactenum}
\item[1.] \textit{Shock in the asset value}: At time $\tau^{0} > 0$, the asset experiences a jump change (shock) in its value due to exogenous factors. 
The change point  $\tau^0$ is modelled by a   
{\em  phase type (PH) distribution}. 
%A systematic investigation of the statistical properties of PH-distributions can be found in \cite{Neu89}.
The family of all PH-distributions forms a dense subset for the set of all distributions
	\cite{Neu89} i.e., for any given distribution function $F$ such that $F(0) = 0$, one can find a sequence of PH-distributions	
$\{F_n , n	\geq	1\}$	 to		approximate	$F$	uniformly over $[0, \infty)$.
The PH-distributed time $\tau^0$ can be constructed via
 a multi-state  Markov chain $x_k$ with state space $\mathcal{X} = \{1,\hdots, X\}$ as follows:
Assume    state `1'   is an absorbing state
and denotes the state after the jump change.    The states $2,\ldots,X$ (corresponding to beliefs $e_2,\ldots,e_X$) can be viewed as  a single composite state that $x$ resides in before the jump. 
 So $\tau^{0} = \text{inf} \lbrace{k:x_{k}=1} \rbrace$ and the transition probability matrix $P$ is of the form
\begin{equation}
P = \begin{bmatrix}
       1 & 0        \\
       \underline{P}_{(X-1)\times1}   & {\bar{P}_{(X-1)\times(X-1)}}
     \end{bmatrix}
\end{equation}
 The 
distribution of the absorption time to state 1 is 
\begin{equation} \label{eq:nu}
 \nu_0 = \pi_0(1), \quad \nu_k = \bar{\pi}_0' \bar{P}^{k-1} \underline{P}, \quad k\geq 1, \end{equation}
 where $\bar{\pi}_0 = [\pi_0(2),\ldots,\pi_0(X)]'$.
The key idea  is that by appropriately choosing the pair $(\pi_0,P)$ 
and the associated state space dimension $X$,
 one can approximate any given discrete distribution on $[0, \infty)$ by the distribution $\{\nu_k, k \geq 0\}$; see 
\cite[pp.240-243]{Neu89}.
 The event $\{x_k = 1\}$ means the change point has occurred at time $k$ according
 to PH-distribution (\ref{eq:nu}). In the special case when $x$ is a 2-state Markov chain,
 the change time $\tau^0$  is geometrically distributed.
%The states $2,3,\hdots,X$ are assumed to be transient to ensure $\tau_{0}$ is finite.

\item[2.] \textit{Agent's Private Observation}: Agent $k$'s private (local) observation denoted by $y_{k}$ is a noisy measurement of the true value of the asset. It is obtained from the observation likelihood distribution as,
\begin{equation}
B_{xy} = \Prb(y_{k}=y|x_{k}=x)
\end{equation} 

\item[3.] \textit{Private Belief update}: Agent $k$ updates its private belief using the observation $y_{k}$ and the prior public belief $\pi_{k-1}(i) = \Prb(X=i|a_{1},\hdots,a_{k-1})$ as the following Hidden Markov Model update
\begin{equation}
\eta_{k} = \frac{B_{y_{k}}P'\pi_{k-1}}{\textbf{1}'B_{y_{k}}P'\pi_{k-1}}
\end{equation}
\item[4.] \textit{Agent's trading decision}: Agent $k$ executes an action $a_{k}\in\mathcal{A}=\lbrace1(\text{buy}),2(\text{sell})\rbrace$ to myopically minimize its cost. Let $c(i,a)$ denote the cost incurred if the agent takes action $a$ when the underlying state is $i$. Let the local cost vector be 
\begin{equation}
c_{a} = [c(1,a)~c(2,a) \dots ~c(X,a)]
\end{equation}
The costs for different actions are taken as
\begin{equation}
c(i,j) = p_{j}-\beta_{ij} ~ \text{for} ~ i \in \mathcal{X}, j \in \mathcal{A}
\end{equation}
where  $\beta_{ij}$ corresponds to the agent's demand. Here demand is the agent's desire and willingness to trade at a price $p_{j}$ for the stock. Here $p_{1}$ is the quoted price for purchase and $p_{2}$ is the price demanded in exchange for the stock. We assume that the price is the same during the period in which the value changes. As a result, the willingness of each agent only depends on the degree of uncertainty on the value of the stock.
\begin{remark}
The analysis provided in this paper straightforwardly extends  to the case when different agents are facing different prices like in an order book \cite{AT15}, \cite{AS08}, \cite{KA13}. For notational simplicity we assume the cost are time invariant.
\end{remark}

  The agent considers measures of risk in the presence of uncertainty in order to overcome the losses incurred in trading. To illustrate this, let $c(x,a)$ denote the loss incurred with action $a$ while at unknown and random state $x\in{\mathcal{X}}$. When an agent solves an optimization problem involving $c(x,a)$ for selecting the best trading decision, it will take into account not just the expected loss, but also the ``riskiness" associated with the trading decision $a$. 
The agent therefore chooses an action $a_{k}$ to minimize the CVaR   measure\footnote{
For the reader unfamiliar with risk measures, it should be noted that CVaR is one of the `big' developments in risk modelling in finance in the last 15 years.
In comparison, the 
 value at risk (VaR) is the percentile loss namely,  $\text{VaR}_\alpha(x) = \min\{z: F_x(z) \geq \alpha\} $ for cdf $F_x$. While CVaR is a coherent risk measure,
 VaR is not  convex and so not coherent. CVaR has  other remarkable properties \cite{RU00}: it is continuous in $\alpha$
 and jointly convex in $(x,\alpha)$.
 %, while
% VaR is a non-convex discontinuous function. 
For continuous cdf $F_x$, $\text{CVaR}_\alpha(x) = \E\{X | X > \text{VaR}_\alpha(x)\}$. Note that the variance  is
 not  a coherent risk~measure.} of trading as
\begin{align}
a_{k} &=  {\underset{a \in \mathcal{A}}{\text{argmin}}} \{ \cvr \} \\
&=  {\underset{a \in \mathcal{A}}{\text{argmin}}} \{ {\underset{z \in \mathbb{R}}{\text{min}}} ~ \{ z + \frac{1}{\alpha} \mathbb{E}_{y_{k}}[{\max} \{ (c(x_{k},a)-z),0 \rbrace] \} \} \nonumber
\end{align}
Here $\alpha \in (0,1]$ reflects the degree of risk-aversion for the agent (the smaller $\alpha$ is, the more risk-averse the agent is). 
Define 
\begin{equation}
\mathcal{H}_{k} := \sigma \text{- algebra generated by}~ (a_{1},a_{2},\hdots,a_{k-1},y_{k}) 
\end{equation} 
$\mathbb{E}_{y_{k}}$ denotes the expectation with respect to private belief, i.e, $\mathbb{E}_{y_{k}} = \mathbb{E}[.|\mathcal{H}_{k}]$ when the private belief is updated after observation $y_{k}$.
 
%\end{equation}

\item[5.] \textit{Social Learning and Public belief update}: Agent $k$'s action is recorded in the order book and hence broadcast publicly. Subsequent agents and the market observer update the public belief on the value of the stock according to the social learning Bayesian filter as follows 
\begin{equation}
\pi_{k} = T^{\pi_{k-1}} (\pi_{k-1},a_k)  = \frac{R_{a_{k}}^{\pi_{k-1}}P'\pi_{k-1}}{\textbf{1}'R_{a_{k}}^{\pi_{k-1}}P'\pi_{k-1}}
\end{equation}

Here, $R_{a_{k}}^{\pi_{k-1}} = \text{diag}(\Prb(a_{k}|x=i,\pi_{k-1}),i \in \mathcal{X})$, where $\Prb(a_{k}|x=i,\pi_{k-1}) = {\underset{y \in \mathcal{Y}}{\sum}} \Prb(a_{k}|y,\pi_{k-1})\Prb(y|x_{k}=i)$ and 
\[ \Prb(a_{k}|y,\pi_{k-1}) = \left\{ \begin{array}{ll}
         1 & \mbox{if $ a_{k} = {\underset{a \in \mathcal{A}}{\text{argmin}}}~ \text{CVaR}_{\zeta}(c(x_{k},a))$} ; \\
         0 & \mbox{$\text{otherwise}$}.\end{array} \right. \] 
Note that $\pi_k$ belongs to the unit simplex  $\Pi(X){\overset{\Delta}{=}}\lbrace \pi \in \mathbb{R}^{X} : \textbf{1}_{X}'\pi = 1, 0 \leq \pi \leq 1 ~\text{for all} ~ i \in \mathcal{X}\rbrace $. 
\item[6.]\textit{ Market Observer's Action}: The market observer (securities dealer) seeks to achieve quickest detection by balancing delay with false alarm. At each time $k$, the market observer chooses action\footnote{It is important to distinguish between the ``local'' decisions $a_k$ of the agents and ``global'' decisions $u_k$ of the market maker. Clearly the decisions 
$a_k$ affect the choice of $u_k$ as will be made precise below.} $u_{k}$ as
\begin{equation}
u_{k} \in \mathcal{U} =  \lbrace 1(\text{stop}), 2(\text{continue}) \rbrace
\end{equation}
Here `Stop' indicates that the value has changed and the dealer incorporates this information before selling new issues to investors. The formulation presented considers a general parametrization of the costs associated with detection delay and false alarm costs. 
Define  %he following $\sigma$-algebras :
\begin{equation}
%\begin{split}
\mathcal{G}_{k} := \sigma \text{- algebra generated by}~  (a_{1},a_{2},\hdots,a_{k-1},a_{k}).
%\end{split}
\end{equation}  
\begin{compactitem}
\item[i)] \textit{Cost of Stopping}: The asset experiences a jump change(shock) in its value at time $\tau^{0}$. If the action $u_{k}=1$ is chosen before the change point, a false alarm penalty is incurred. This corresponds to the event ${\underset{i\geq2}\cup} \lbrace x_{k}=i \rbrace \cap \lbrace u_{k}=1 \rbrace$. Let $f_{i}\mathcal{I}(x_{k}=i,u_{k}=1)$ denote the cost of false alarm in state $i,i\in \mathcal{X}$ with $f_{i}\geq0$. The expected false alarm penalty is
\begin{align} \label{eq:falc}
C(\pi_{k},u_{k}=1) &= {\underset{i\in\mathcal{X}}{\sum}}f_{i}\mathbb{E}\lbrace\mathcal{I}(x_{k}=i,u_{k}=1)|\mathcal{G}_{k} \rbrace \nonumber \\
 &= \textbf{f}'\pi_{k}
\end{align}
where $\textbf{f} = (f_{1},\hdots,f_{X})$ and it is chosen with increasing elements, so that states further from `$1$' incur higher penalties. Clearly, $f_{1}=0$.

\item[ii)] \textit{Cost of delay}: A delay cost is incurred when the event $\lbrace x_{k}=1,u_{k}=2 \rbrace$ occurs, i.e, even though the state changed at $k$, the market observer fails to identify the change. The expected delay cost is 
\begin{align} \label{eq:delc}
C(\pi_{k},u_{k}=2) &= d\, \mathbb{E}\lbrace\mathcal{I}(x_{k}=i,u_{k}=1)|\mathcal{G}_{k} \rbrace \nonumber \\
&= de_{1}'\pi_{k}
\end{align}
where $d>0$ is the delay cost and $e_{1}$ denotes the unit vector with 1 in the first position.
\end{compactitem}
\end{compactenum}

\subsection{Market Observer's Quickest Detection Objective}
%Let $\Omega = (\mathcal{X}\times\mathcal{U}\times\mathcal{Y})^{\infty}$ be the product space endowed with the product topology and $\mathcal{F}$ be the corresponding $\sigma$- algebra.  
The market maker chooses its action at each time $k$ as 

\begin{equation} u_k = \mu(\pi_k)  \in  \lbrace 1(\text{stop}), 2(\text{continue}) \rbrace \label{eq:marketaction}
\end{equation}
 where $\mu$ denotes a stationary policy.
For each initial distribution $\pi_{0} \in \Pi(X)$ and policy $\mu$, the following cost is associated
\begin{equation} \label{eq:Obj}
J_{\mu}(\pi_{0}) = \mathbb{E}^{\mu}_{\pi_{0}} \left\{ {\overset{\tau-1} {\underset{k=1} {\sum}}} \rho^{k-1}C(\pi_{k},u_{k}=2)+\rho^{\tau-1}C(\pi_{k},u_{k}=1) \right\}
\end{equation}
where $\rho \in [0,1]$ denotes an economic  discount factor. 
(As long as $\textbf{f} $ is non-zero, stopping is guaranteed in finite time and so $\rho = 1$ is allowed.)
%Since the costs $\bar{C}(\pi_{k},u_{k}=2)$ and $\bar{C}(\pi_{k},u_{k}=1)$ are non-negative and bounded for all $\pi \in \Pi(X)$, stopping is guaranteed in finite time.

Given the cost, the market observer's objective is to determine $\tau^{0}$ with minimum cost by computing an optimal policy $\mu^{*}$ such that
\begin{equation}  \label{eq:pomdp}
J_{\mu^{*}}(\pi_{0}) = {\underset{\mu\in \boldsymbol{\mu}}{\text{inf}}J_{\mu}(\pi_{0})}
\end{equation}
The sequential detection problem 
(\ref{eq:pomdp}) can be viewed as a  partially observed Markov decision process (POMDP) where  the belief update is given by the social
learning filter.

\subsection{Stochastic Dynamic Programming Formulation} \label{sec:sdp}

The optimal policy of the market observer $\mu^{*}:\Pi(X)\rightarrow\{1,2\}$ is the solution of \eqref{eq:Obj} and is given by Bellman's dynamic programming equation as follows: 
\begin{align}
V(\pi) &= \text{min} \left\{ C(\pi,1),C(\pi,2)+\rho{\underset{a\in \mathcal{A}}{\sum}}V(T^{\pi} (\pi,a))\sigma(\pi,a)\right\} \label{eq:val}\\
\mu^{*}(\pi) &= \text{argmin} \left\{ C(\pi,1),C(\pi,2)+\rho{\underset{a\in \mathcal{A}}{\sum}}V(T^{\pi}(\pi,a))\sigma(\pi,a) \nonumber \right\}
\end{align}
where $T^{\pi} (\pi,a) = \frac{R_{a}^{\pi}P'\pi}{\textbf{1}'R_{a}^{\pi}P'\pi}$ is the CVaR-social learning filter.  $C(\pi,1)$ and $C(\pi,2)$ from \eqref{eq:falc} and \eqref{eq:delc} are the market observer's costs. Here $\rho \in [0,1]$ is the discount factor which is a measure of the degree of impatience of the market observer. As $C(\pi,1)$ and $C(\pi,2)$ are non-negative and bounded for $\pi \in \Pi(X)$, the stopping time $\tau$ is finite for all $\rho \in [0,1]$.

The aim of the market observer is then to determine the stopping set $\mathcal{S} = \{ \pi \in \Pi(X) : \mu^{*}(\pi) = 1 \}$ given by:
\begin{equation*}
\mathcal{S} = \left\{ \pi : C(\pi,1) < C(\pi,2) + \rho {\underset{a\in \mathcal{A}}{\sum}}V(T^{\pi}(\pi,a))\sigma(\pi,a) \right\}
\end{equation*}

\section{Properties of CVar Social Learning Filter} \label{sec:prop}

 This section discusses the main results regarding the structural properties of the CVaR social learning filter and highlights the significant role it plays in charactering the properties of market observer's value function and optimal policy.  According to Theorem  \ref{thm:a}, risk-averse agents take decisions that are monotone and ordinal in the observations and monotone in the prior; and its monotone ordinal behaviour implies that a Bayesian model chosen in this paper is a useful idealization.  

\subsection{Assumptions}
 
The following assumptions will be used throughout the paper:
\begin{enumerate}
\item[(A1)] Observation matrix $B$ and transition matrix $P$ are TP2 (all second order minors are non-negative)
\item[(A2)] Agents' local cost vector $c_{a}$ is sub-modular. That is $c(x,2)-c(x,1)\leq c(x+1,2)-c(x+1,1)$.
\end{enumerate} 
The matrices being TP2 \cite{KR80} ensures that the public belief Bayesian update can be compared with the prior \cite{Lov87} and sub-modular \cite{Top98} costs ensure that if it is less risky to choose  $a = 2$ when in $x$, it is also less risky to choose it when in $x+1$.
\subsection{Properties of CVaR social learning filter}

The $\mathcal{Y}\times \mathcal{A}$ local decision likelihood probability matrix $R^{\pi}$ (analogous to observation likelihood) can be computed as
\begin{align} \label{eq:rpi}
R^{\pi} &= BM^{\pi}, \text{where}~ M^{\pi}_{y,a} {\overset{\Delta}{=}} \Prb(a|y,\pi) \\
\Prb(a|y,\pi) &= \mathcal{I}(\text{CVaR}_{\alpha}(c(x_{k},a)) < \text{CVaR}_{\alpha}(c(x_{k},a'))) \nonumber
\end{align}
 where $a' = A/\{a\}$. Here $\mathcal{I}$ denotes the indicator function. 
 
Let $H^{\alpha}(y,a)= \cvr$ denote the cost with CVaR measure, associated with action $a$ and observation $y$ for convenience i.e, 
\begin{equation} \label{eq:hya}
H^{\alpha}(y,a) =   {\underset{z \in \mathbb{R}}{\text{min}}} ~ \{ z + \frac{1}{\alpha} \mathbb{E}_{y}[{\max} \{ (c(x,a)-z),0 \rbrace] \} 
\end{equation}

Here $\mathbb{E}_{y} = \mathbb{E}[.|\mathcal{H}_{k}]$. $y$ indicates the dependence of  $\mathbb{E}$ and hence $H^{\alpha}$ on the observation.  Let $a^{*}(\pi,y) = {\text{argmin}} ~ H^{\alpha}(y,a)$ denote the optimal action of the agent with explicit dependence on the distribution and observation.

The following result says that agents choose a trading decision that is monotone and ordinal in their private observation. Humans typically convert numerical attributes to ordinal scales before making  decisions. For example,
it does not matter if the cost of a meal at a restaurant is \$200 or \$205; an individual would classify this cost as ``high". 
Also credit rating agencies use ordinal symbols such as AAA, AA, A.

\begin{theorem}\label{thm:a}
Under (A1) and (A2), the action $a^{*}(\pi,y)$ made by each agent is increasing and hence ordinal in $y$ for any prior belief $\pi$.\\
Under (A2), $a^{*}(\pi,y)$  is increasing in $\pi$ with respect to the monotone likelihood ratio order (Definition \ref{def:mlrdef} in the appendix).
\end{theorem}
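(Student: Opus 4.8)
The plan is to reduce both assertions to a single structural fact: the CVaR-optimal action $a^{*}$ depends on $(\pi,y)$ only through the private posterior $\eta=\eta(\pi,y)=B_{y}P'\pi/(\mathbf{1}'B_{y}P'\pi)$, and, as a function of this belief, the argmin is monotone with respect to the MLR order $\leq_{r}$ of Definition \ref{def:mlrdef}. Granting this core fact, the two parts follow by tracking how $\eta$ moves. For the first part, since $B$ is TP2 (A1) the posterior $\eta$ is $\leq_{r}$-increasing in the observation $y$ for every fixed $\pi$ (TP2 likelihoods make the posterior MLR-increasing in the observation); composing with belief-monotonicity of $a^{*}$ gives that $a^{*}(\pi,y)$ is increasing in $y$. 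For the second part, fix $y$: because $P$ is TP2 the predictor $\pi\mapsto P'\pi$ preserves $\leq_{r}$, and left multiplication by the positive diagonal matrix $B_{y}$ preserves $\leq_{r}$ as well, since the common observation factor $B_{iy}$ cancels in every likelihood ratio; hence $\eta$ is $\leq_{r}$-increasing in $\pi$ and so is $a^{*}$.

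It therefore remains to prove the core fact: $a^{*}(\eta)=\arg\min_{a}H^{\alpha}(\eta,a)$ is $\leq_{r}$-monotone. As $\mathcal{A}=\{1,2\}$ is binary, this is equivalent to a single-crossing statement for $D(\eta):=H^{\alpha}(\eta,2)-H^{\alpha}(\eta,1)$, namely that $D$ does not switch from negative to positive as $\eta$ increases in $\leq_{r}$. My approach is to exploit the Rockafellar--Uryasev representation already used in \eqref{eq:hya}, writing $H^{\alpha}(\eta,a)=\min_{z}\,\hat{c}_{z}(\cdot,a)'\eta$ with the modified, affine-in-belief cost $\hat{c}_{z}(x,a)=z+\tfrac{1}{\alpha}(c(x,a)-z)^{+}$. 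For each frozen $z$ the inner objective is an ordinary risk-neutral expected cost, so the standard social-learning argument applies: if $\hat{c}_{z}$ is submodular in $(x,a)$, then the difference $\hat{c}_{z}(\cdot,2)'\eta-\hat{c}_{z}(\cdot,1)'\eta$ is $\leq_{r}$-monotone, because MLR dominance implies first-order stochastic dominance and the expectation of an increasing cost-difference against an MLR-larger belief can only move in one direction.

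The two remaining steps are to propagate submodularity through the truncation and to remove the freezing of $z$. For the first, note that $u\mapsto(u-z)^{+}$ is increasing and convex, so $\hat{c}_{z}(x,2)-\hat{c}_{z}(x,1)=\tfrac{1}{\alpha}\int_{c(x,1)}^{c(x,2)}\mathbf{1}\{u>z\}\,du$; using that each $c(\cdot,a)$ is monotone in the state (inherited from the demand structure of $c(i,j)=p_{j}-\beta_{ij}$) together with the fact that the gap $c(x,2)-c(x,1)$ is increasing in $x$ by (A2), this integral is nondecreasing in $x$, so $\hat{c}_{z}$ inherits submodularity for every $z$. For the second, I would invoke an envelope argument: the inner minimizer $z^{*}(\eta,a)$ is exactly the $\text{VaR}$ level and is itself $\leq_{r}$-monotone, and at the optimum the $z$-derivative of the objective vanishes, so the variation of $D$ with $\eta$ is governed entirely by the already-controlled truncated-cost differences; hence $D$ single-crosses and $a^{*}(\eta)$ is $\leq_{r}$-monotone. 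Finally, a monotone map into the two-point set $\{1,2\}$ is a threshold rule in $y$, so $a^{*}$ depends on $y$ only through its rank and is therefore ordinal.

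The step I expect to be the main obstacle is precisely the removal of the inner minimization over $z$. In contrast to the risk-neutral filter, where the decision difference is linear in the belief and single-crossing is immediate, here $H^{\alpha}(\eta,\cdot)$ is a difference of two concave (min-of-linear) functionals whose minimizers $z^{*}(\eta,a)$ differ across actions and drift as $\eta$ changes, so one cannot simply quote the linear theory at a single common $z$. Making the envelope/monotone-minimizer argument rigorous---ensuring that the truncation keeps the modified cost submodular over the whole relevant range of $z$ and that optimizing $z$ does not destroy the single-crossing of $D$---is the delicate part, and it is exactly where the coherence of CVaR and its consistency with first-order stochastic dominance do the real work.
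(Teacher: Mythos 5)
Your reduction to the private posterior $\eta$ and the frozen-$z$ linearization is a reasonable plan, but the proof has a genuine gap at exactly the step you flag, and flagging it does not close it. Writing $H^{\alpha}(\eta,a)=\min_{z}\hat{c}_{z}(\cdot,a)'\eta$ makes each $H^{\alpha}(\cdot,a)$ a concave (min-of-linear) functional of the belief, and the difference of two such minima --- taken over \emph{different} optimizers $z^{*}(\eta,1)\neq z^{*}(\eta,2)$ that themselves move with $\eta$ --- need not inherit single crossing from the frozen-$z$ differences. The envelope theorem controls the local derivative of each value function along a one-dimensional parameter at its own optimum; single crossing with respect to the MLR order on the simplex is a global, order-theoretic property, so "the $z$-derivative vanishes, hence the variation of $D$ is governed by the truncated-cost differences" is a heuristic, not an argument. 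A secondary problem is that the claimed submodularity of $\hat{c}_{z}$ for every $z$ is false under (A2) alone: $\hat{c}_{z}(x,2)-\hat{c}_{z}(x,1)=\frac{1}{\alpha}\left[\max(c(x,2),z)-\max(c(x,1),z)\right]$, and because $t\mapsto\max(t,z)$ has slope $0$ below $z$ and $1$ above, one can choose costs satisfying (A2) (and even with each $c(\cdot,a)$ monotone) together with a level $z$ straddling the relevant values so that this difference \emph{decreases} in $x$; submodularity survives the truncation only for suitably restricted $z$, which you do not establish.

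The paper's own proof is built precisely to avoid this obstruction, and it is instructive that even there the step is not free. It first pins the inner minimizer down to a cost value, $\text{argmin}_{z}h_{y}(z)=c(i,a)$ for some state index $i$ (Lemma \ref{lem:z}), then shows via first-order stochastic dominance that this index is monotone in the observation (Lemma \ref{lem:kl}), and then \emph{directly} bounds $H^{\alpha}(y,2)-H^{\alpha}(y+1,2)\geq 0$ and $H^{\alpha}(y+1,1)-H^{\alpha}(y,1)\geq 0$ by explicit manipulation of the truncated sums (Lemmas \ref{lem:hy1} and \ref{lem:hy2}) --- and only under an additional quantitative condition of the form $\alpha\geq 1-\Prby(x=X)$, i.e.\ the agent must not be too risk-averse relative to the tail mass. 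These two monotonicity bounds are then chained to get the single-crossing Lemma \ref{lem:main}, from which Theorem \ref{thm:a} follows by the Topkis-type result. The fact that the paper needs this extra condition on $\alpha$ is strong evidence that the single crossing you are trying to extract from the frozen-$z$ picture does not hold unconditionally, so any completed version of your argument would have to surface an analogous restriction; as written, the proposal does not prove the theorem.
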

The proof is given in the appendix. Theorem \ref{thm:a} says that agents exhibit monotone ordinal behaviour. The condition that $a^{*}(\pi,y)$ is monotone in the observation $y$ is required to characterize the local decision matrices on different regions in the belief space which is stated next. 

%\textit{Remark :} It can be verified that $a^{*}(\eta_{y},y)$ is monotone in the prior. Since the result is not required in characterizing the belief regions, it is not proved. However, it should be noted that the same proof methodology can be used to prove this result.

\begin{theorem} \label{thm:p}
Under (A1) and (A2), there are at most $Y+1$ distinct local decision likelihood matrices $R^{\pi}$ and the belief space $\Pi(X)$ can be partitioned into the following $Y+1$ polytopes:
\begin{IEEEeqnarray}{c}
\mathcal{P}^{\alpha}_{1} = \{\pi\in \Pi(X) : H(1,1)-H(1,2) \geq 0 \} \IEEEnonumber\\
\quad \mathcal{P}^{\alpha}_{l} = \{\pi\in \Pi(X) : H(l-1,1)-H(l-1,2) < 0 \\ 
\quad ~ \cap ~ H(l,1)-H(l,2) \geq 0 \}, ~ l=2,\hdots,Y \IEEEnonumber \\
\mathcal{P}^{\alpha}_{Y+1} = \{\pi\in \Pi(X) : H(Y,1)-H(Y,2) < 0 \} \IEEEnonumber
\end{IEEEeqnarray}
Also, the matrices $R^{\pi}$ are constant on each of these polytopes.
\end{theorem}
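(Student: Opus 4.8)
The plan is to reduce the whole statement to the threshold structure furnished by Theorem \ref{thm:a} and then to argue that the sign of $H^{\alpha}(y,1)-H^{\alpha}(y,2)$, viewed as a function of $\pi$, switches across hyperplanes. First I would exploit the first part of Theorem \ref{thm:a}: for a fixed prior $\pi$ the optimal action $a^{*}(\pi,y)$ is increasing in $y$, and since $\mathcal{A}=\{1,2\}$ this is equivalent to the existence of a single threshold $l(\pi)\in\{1,\dots,Y+1\}$ with $a^{*}(\pi,y)=1$ for $y<l(\pi)$ and $a^{*}(\pi,y)=2$ for $y\geq l(\pi)$ (with the conventions that $l=1$ means ``always sell'' and $l=Y+1$ means ``always buy''). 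Because the $y$-th row of $M^{\pi}$ is the unit vector $e_{a^{*}(\pi,y)}'$, the entire matrix $M^{\pi}$, and hence $R^{\pi}=BM^{\pi}$, is determined by $l(\pi)$ alone. This immediately yields the two easy claims: $l(\pi)$ takes at most $Y+1$ values, so there are at most $Y+1$ distinct $R^{\pi}$, and $R^{\pi}$ is constant on each level set $\{l(\pi)=l\}$. Identifying $\{l(\pi)=l\}$ with $\mathcal{P}^{\alpha}_{l}$ then amounts to rewriting ``threshold equals $l$'' as $a^{*}(\pi,l-1)=1$ and $a^{*}(\pi,l)=2$, i.e. $H(l-1,1)-H(l-1,2)<0$ and $H(l,1)-H(l,2)\geq 0$, which is exactly the stated definition; monotonicity in $y$ guarantees the sets are disjoint and cover $\Pi(X)$.

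The remaining, genuinely delicate, assertion is that each $\mathcal{P}^{\alpha}_{l}$ is a \emph{polytope}, i.e. that $\{\pi:H(y,1)-H(y,2)\geq 0\}$ is cut out by linear inequalities. Here I would work from the Rockafellar--Uryasev representation \eqref{eq:hya}. The private belief enters only through $\eta\propto B_{y}P'\pi$, and the normaliser $N_{y}=\mathbf{1}'B_{y}P'\pi$ is a strictly positive linear functional of $\pi$ common to both actions, so $\mathrm{sign}\big(H(y,1)-H(y,2)\big)=\mathrm{sign}\big(N_{y}H(y,1)-N_{y}H(y,2)\big)$. For each action,
\begin{equation*}
N_{y}H^{\alpha}(y,a)=\min_{z\in\mathbb{R}}\Big\{zN_{y}+\tfrac{1}{\alpha}\textstyle\sum_{x}B_{xy}(P'\pi)(x)\,\max\{c(x,a)-z,0\}\Big\},
\end{equation*}
which, being an infimum over $z$ of functions affine in $\pi$, is concave and piecewise linear in $\pi$; the ``pieces'' are the cells on which the optimiser $z^{*}$ (the $\mathrm{VaR}_{\alpha}$ quantile) is pinned to a fixed cost value $c(x,a)$. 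The boundaries of these cells are themselves hyperplanes, since the quantile switches exactly where a cumulative tail mass $\sum_{x:c(x,a)\geq t}B_{xy}(P'\pi)(x)=\alpha N_{y}$ crosses a breakpoint, a linear condition in $\pi$. On any single cell both $z^{*}(\cdot,1)$ and $z^{*}(\cdot,2)$ are constant, so $N_{y}H(y,1)-N_{y}H(y,2)$ collapses to one affine functional of $\pi$ and $\{H(y,1)\geq H(y,2)\}$ is \emph{locally} a half-space.

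The hard part will be assembling these cell-wise linear boundaries into the single convex polytope asserted by the theorem. A priori, $N_{y}H(y,1)-N_{y}H(y,2)$ is only a difference of two piecewise-linear concave functions, whose superlevel set need not be convex; the candidate ``crossed quantile'' configurations, where the two actions place their $\alpha$-tails on different states, are precisely what could introduce re-entrant kinks in the boundary. This is where I expect the second part of Theorem \ref{thm:a} to do the essential work: I would use the fact that $a^{*}(\pi,y)$ is monotone in $\pi$ with respect to the MLR order to show that $\{H(y,1)-H(y,2)\geq 0\}$ is an MLR upper set, so that along any MLR-monotone segment the difference has a single sign change. Combined with the cell decomposition, I would then argue that (A1)--(A2) --- TP2 of $B,P$ together with submodularity of $c_{a}$ --- rule out the crossed-tail configurations, forcing the active tail sets of the two actions to agree across the decision boundary and thereby collapsing the piecewise-linear surface to a single supporting hyperplane.

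Verifying that (A1)--(A2) genuinely exclude these configurations, and hence that the superlevel set is convex rather than merely bounded by a piecewise-linear surface, is the crux and the step I expect to require the most care. Once it is established, intersecting the two half-spaces $\{H(l,1)-H(l,2)\geq 0\}$ and $\{H(l-1,1)-H(l-1,2)<0\}$ exhibits $\mathcal{P}^{\alpha}_{l}$ as a polytope, and the constancy of $R^{\pi}$ on it, already noted in the first paragraph, completes the proof.
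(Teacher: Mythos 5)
Your first paragraph is, in substance, the paper's entire proof. The paper invokes Lemma \ref{lem:main} (single crossing of $H^{\alpha}(y,a)$ in $y$) to obtain the nesting $\{\pi : H^{\alpha}(y,1)-H^{\alpha}(y,2) \geq 0\} \subseteq \{\pi : H^{\alpha}(y+1,1)-H^{\alpha}(y+1,2) \geq 0\}$, which is exactly your single-threshold function $l(\pi)$; from this it reads off that the sign pattern of $y \mapsto H^{\alpha}(y,1)-H^{\alpha}(y,2)$ takes at most $Y+1$ values, that the $\mathcal{P}^{\alpha}_{l}$ are disjoint and cover $\Pi(X)$, and that $M^{\pi}$ (hence $R^{\pi}=BM^{\pi}$) is determined by, and constant on, each region. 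For everything the paper actually argues, your route and the paper's coincide.

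Where you diverge is in paragraphs two through four: the paper never addresses whether the sets $\{\pi : H^{\alpha}(y,1)-H^{\alpha}(y,2)\geq 0\}$ are convex, i.e.\ whether the $\mathcal{P}^{\alpha}_{l}$ are literally polytopes; it asserts the partition and stops. You are right that this is the one genuinely delicate point: after clearing the positive normaliser $\mathbf{1}'B_{y}P'\pi$, each $N_{y}H^{\alpha}(y,a)$ is a pointwise minimum of finitely many functions linear in $\pi$ (by Lemma \ref{lem:z} the optimal $z$ sits at some $c(i,a)$), so the decision boundary is the zero set of a difference of concave piecewise-linear functions and its superlevel set need not be convex. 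However, your sketch does not close this gap either: you explicitly defer the crux (that (A1)--(A2) exclude the ``crossed-quantile'' configurations), and the tool you propose --- that $\{\pi : H^{\alpha}(y,1)-H^{\alpha}(y,2)\geq 0\}$ is an MLR upper set --- cannot by itself deliver convexity, since for $X>2$ a straight line segment in $\Pi(X)$ is generally not totally ordered by $\geq_{r}$, so a single sign change along MLR chains does not preclude re-entrant kinks along ordinary segments. In short: your proposal establishes exactly what the paper's proof establishes, via the same mechanism, and correctly isolates the polytope claim as the unproven residue --- but it does not supply the missing argument, and neither does the paper.
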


The proof is given in the appendix. Theorem \ref{thm:p} is required to specify the policy for the market observer. Indeed it leads to unusual
behavior (non-convex) stopping regions in quickest detection as described in Section \ref{sec:nonconvex}.

\section{Social Learning and Change Detection  for risk-averse agents} \label{sec:disc}

This section illustrates the properties of the risk-averse social learning filter which leads to a non-convex value function and therefore non-convex
stopping set of quickest detection.
 
 \subsection{Social Learning Behavior of Risk Averse Agents}
The following discussion highlights the relation between risk-aversion factor $\alpha$ and the regions $\mathcal{P}^{\alpha}_{l}$. For a given risk-aversion factor $\alpha$, Theorem \ref{thm:p} shows that there are at most $Y+1$ polytopes on the belief space. It was shown in \cite{Kri12} that for the risk neutral case with $X=2$, and $P = I$ (the value is a random variable) the intervals $\mathcal{P}^{\alpha}_{1}$ and $\mathcal{P}^{\alpha}_{3}$ correspond to the herding region and the interval $\mathcal{P}^{\alpha}_{2}$ corresponds to the social learning region. In the herding region, the agents take the same action as the belief is frozen. In the social learning region there is observational learning. However, when the agents are optimizing a more general risk measure (CVaR), the social learning region is different for different risk-aversion factors. The social learning region for the CVaR risk measure is shown in Fig. \ref{alp_p}.
\begin{figure}[!t] 
\centering
\includegraphics[scale=0.4]{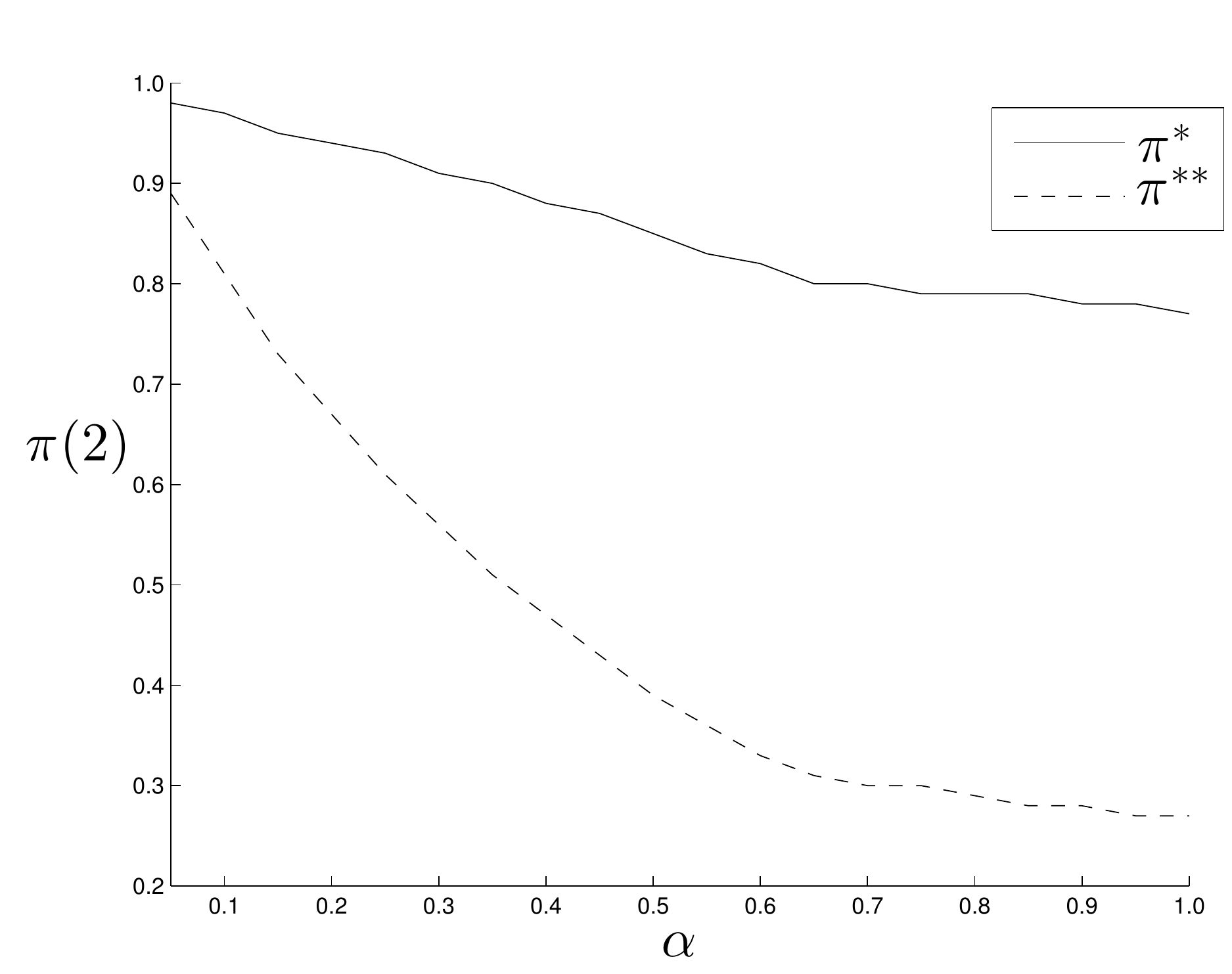}
\caption{The social learning region for $\alpha \in (0,1]$. It can be seen that the curves corresponding to $\pi^{**}$ and $\pi^{*}$ do not intersect and their separation (social learning region) varies with $\alpha$. Here $P=I$, i.e, the value is a random variable. 
 }
\label{alp_p}
\end{figure}
The following parameters were chosen: 
\[ B = \begin{bmatrix}
0.8 & 0.2 \\
0.3 & 0.7
\end{bmatrix},
P =\begin{bmatrix}
1 & 0 \\
0 & 1
\end{bmatrix},
c =\begin{bmatrix}
1 & 2 \\
3 & 0.5
\end{bmatrix}.
\]
It can be observed that the width of the social learning region decreases as $\alpha$ decreases. This can be interpreted as risk-averse agents showing a larger tendency to go with the crowd rather than ``risk" choosing the other action. With the same $B$ and $c$ parameters, but with 
transition matrix
\[
P = \begin{bmatrix}
1 & 0 \\
0.1 & 0.9
\end{bmatrix}
\]
the social learning region is shown in Fig. \ref{alp_p2}.
\begin{figure}[!t] 
\centering
\includegraphics[scale=0.4]{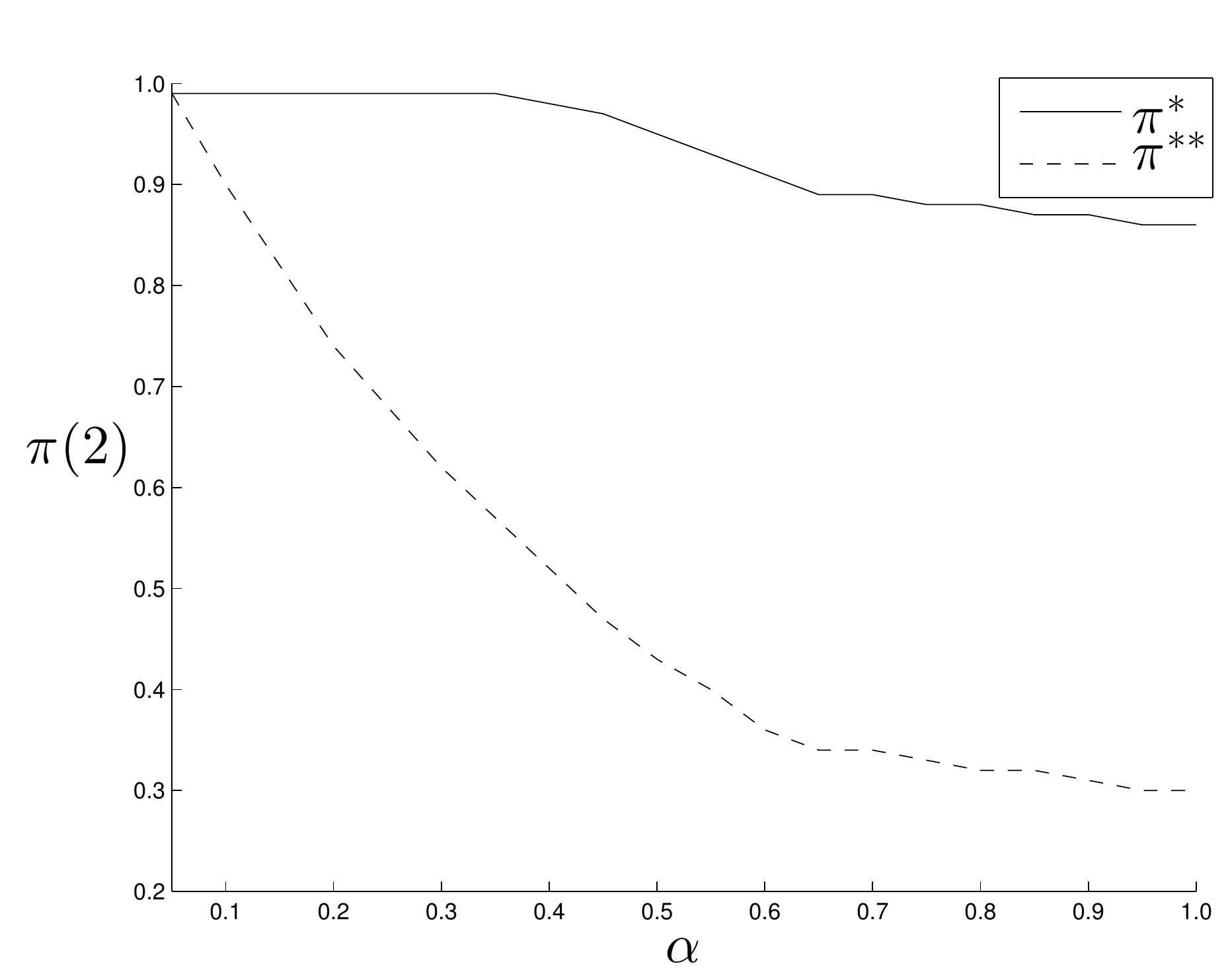}
\caption{The social learning region for $\alpha \in (0,1]$. It can be seen that the social learning region is absent when agents are sufficiently risk-averse and is larger when the stock value is known to change, i.e, $P\neq I$.
 }
\label{alp_p2}
\end{figure}
From Fig. \ref{alp_p2}, it is observed that when the state is evolving and when the agents are sufficiently risk-averse, social learning region is very small. It can be interpreted as: agents having a strong risk-averse attitude don't prefer to ``learn" from the crowd; but rather face the same consequences, when $P \neq I$.

\subsection{Nonconvex   Stopping Set for  Market Shock Detection} \label{sec:nonconvex}
We now illustrate the solution to the Bellman's stochastic dynamic programming equation (\ref{eq:val}), which determines the optimal policy for 
quickest market shock detection, by considering an agent based model with two states. Clearly the agents (local decision makers) and market observer  interact -- the  local decisions $a_k$ taken by the agents determines the public belief
   $\pi_k$ and hence determines decision $u_{k}$ of the  market observer via (\ref{eq:marketaction}).

 From Theorem \ref{thm:p}, the polytopes $\mathcal{P}^{\alpha}_{1}, \mathcal{P}^{\alpha}_{2} ~ \text{and} ~ \mathcal{P}^{\alpha}_{3}$ are subsets of $[0,1]$. Under (A1) and (A2), $\mathcal{P}^{\alpha}_{3} = [0,\pi^{**}(2)), \mathcal{P}^{\alpha}_{2} = [\pi^{**}(2),\pi^{*}(2)), \mathcal{P}^{\alpha}_{1} = [\pi^{*}(2),1]$, where $\pi^{**}$ and $\pi^{*}$ are the belief states at which $H^{\alpha}(2,1)=H^{\alpha}(2,2)$ and $H^{\alpha}(1,1)= H^{\alpha}(1,2)$ respectively. From Theorem \ref{thm:p} and  \eqref{eq:val}, the value function can be written as, 
\begin{align*}
\begin{split}
V(\pi) &= \text{min} \{C(\pi,1), C(\pi,2) + \rho V(\pi) \mathcal{I}(\pi\in \mathcal{P}^{\alpha}_{1}) \\
&\quad + \rho {\underset{a\in \mathcal{A}}{\sum}} V(T^{\pi}(\pi,a))\sigma(\pi,a) \mathcal{I}(\pi\in \mathcal{P}^{\alpha}_{2}) \\
&\quad + \rho V(\pi) \mathcal{I}(\pi\in \mathcal{P}^{\alpha}_{3}) \}
\end{split}
\end{align*}
The explicit dependence of the filter on the belief $\pi$ results in discontinuous value function. The optimal policy in general has multiple thresholds and the stopping region in general is non-convex. 
\subsubsection*{Example}
Fig. \ref{non_cnv} displays  the value function and optimal policy for a toy example having the following parameters:
\[ B = \begin{bmatrix}
0.8 & 0.2 \\
0.3 & 0.7
\end{bmatrix}, 
P =\begin{bmatrix}
1 & 0 \\
0.06 & 0.94
\end{bmatrix},
%\] 
%\[
c = \begin{bmatrix}
1 & 2 \\
2.5 & 0.5
\end{bmatrix}.
\]

The parameters for the market observer are chosen as:
$d = 1.25$, $\textbf{f}=[0 ~ 3]$, $\alpha = 0.8$ and $\rho = 0.9$. 

From Fig. \ref{non_cnv} it is clear that the market observer has a double threshold policy and the value function is discontinuous.
The double threshold policy is unusual from a signal processing point of view. Recall that $\pi(2)$ depicts the posterior probability
of no change. The market observer ``changes its mind" - it switches from no change to change as the posterior
probability of change decreases! Thus the global decision (stop or continue) is a non-monotone function of the posterior
probability obtained from local decisions in the agent based model. The example illustrates the unusual behaviour of the social
learning filter.

\begin{figure}[!t] 
\centering
\includegraphics[scale=0.4]{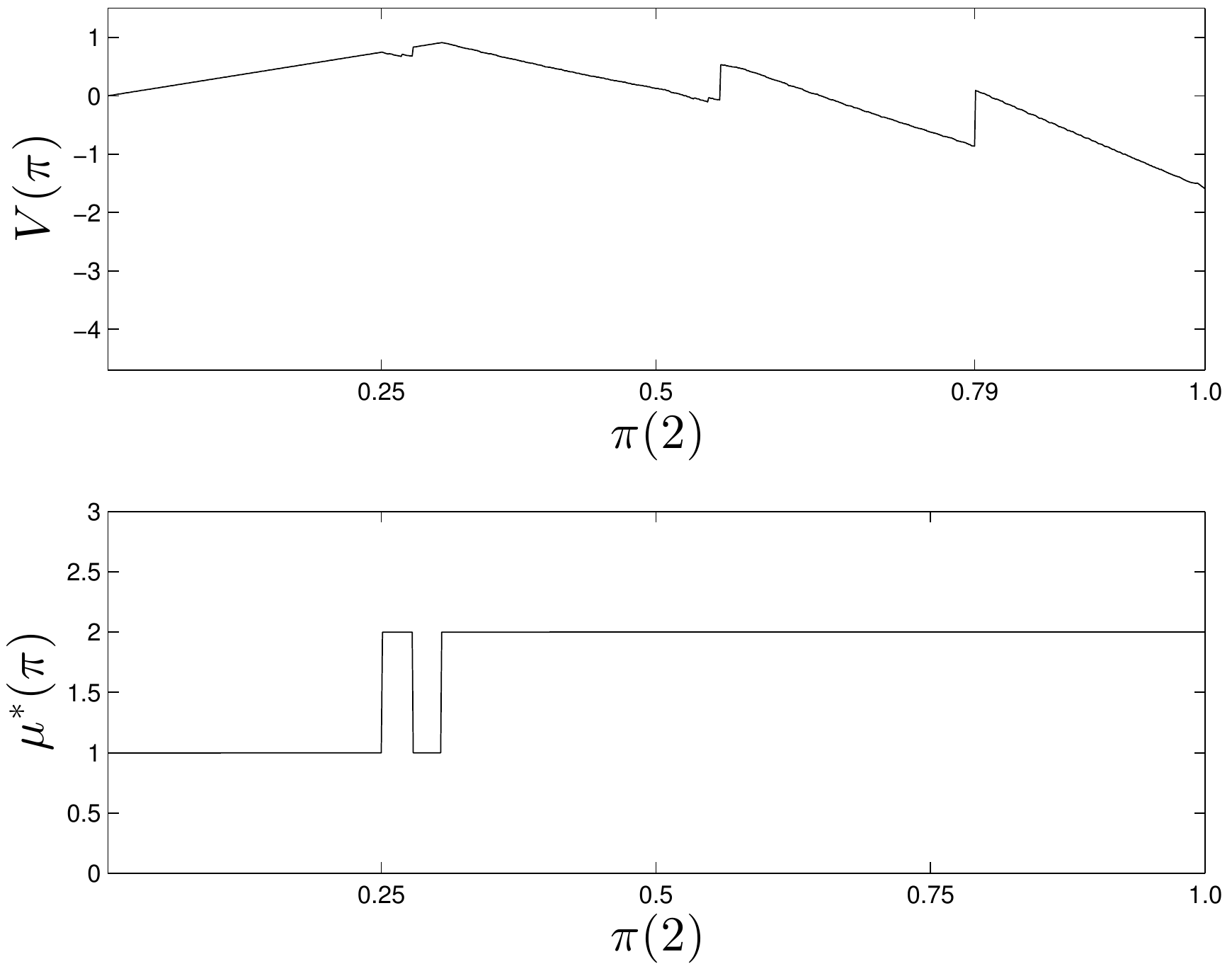}
\caption{The value function $V(\pi)$ and the double threshold optimal policy $\mu^*(\pi)$  are  plotted versus  $\pi(2)$. The significance of the double threshold policy is that the stopping regions are non-convex. The implication of the non-convex stopping set for the market observer is that - if he believes that it is optimal to stop, it need not be optimal to stop when his belief is larger.}
\label{non_cnv}
\end{figure}

\section{Dataset Example} \label{sec:dat}
\subsection{Tech Buzz Game and Model:}
To validate the framework, we consider the data from Tech Buzz Game, a stock market simulation launched by Yahoo! Research and O'Reilly Media to gain insights into forecasting high-tech events and trades. The game consisted of multiple sub-markets trading stocks of contemporary rival technologies. At each trading instant, the traders (or players) had access to the search ``buzz" around each of the stocks. The buzz was an indicator of the number of users scouring Yahoo! search on the stock over the past seven days, as a percentage of all searches in the same market.  Thus, if searches for the stock named ``SKYPE" make up 80 percent of all Yahoo! searches in the telecommunication application software market, then SKYPE's buzz score is 80. The buzz scores of all technologies within a market always add up to 100. The dataset was chosen to demonstrate the framework as the trading information was made available by Yahoo!.

The stock market simulation is modelled as follows. The state $X$ is chosen to represent value of the stock, with $X = 1$ indicating a high valued stock and $X = 2$ indicating a stock of low value. It is well known that if the perceived value is more then it is going to be popular. Hence, the noisy observations are taken to be the buzz scores which are a proxy for the popularity of the stock \cite{CPK08}. For tractability, is assumed that all agents have the same attitude towards risk, i.e, $\alpha$ is the same for all agents. The agents choose to buy ($a=1$) or sell ($a=2$) depending on the cost and the belief updated using the buzz score. On each day the stock is traded, we consider only the agent which buys or sells maximum shares and record its trading decision (positive or negative values in the dataset) as buying or selling a unit of stock. This is reasonable assumption in the sense that the agent trading maximum shares (``big players" in finance) will significantly influence the public belief. 

\subsection{Dataset}
The buzz score for the stocks, \textit{SKYPE} and \textit{IPOD} trading in markets \textit{VOIP} and \textit{PORTMEDIA} respectively, was obtained for the period from April 1, 2005 to July 27, 2005 from the Yahoo! Dataset \footnote{``Yahoo! Webscope", \url{http://research.yahoo.com/Academic_Relations} \\ ydata-yrbuzzgame-transactions-period1-v1.0, ydata-yrbuzzgame-buzzscores-period1-v1.0}. Value of the stock is basically a function of the payout of the stock and its dividend \cite{CPK08}. The payout and dividend are directly proportional to the buzz score. Using the buzz score, value of the stock was calculated with the method suggested in \cite{CPK08}.  Space discretized value of the stocks \textit{SKYPE} and \textit{IPOD} during the period is shown in the Fig. \ref{fig_sim} and Fig. \ref{fig_sim2} along with the scaled buzz score. 
\begin{figure}[!t] 
\centering
\includegraphics[scale=0.4]{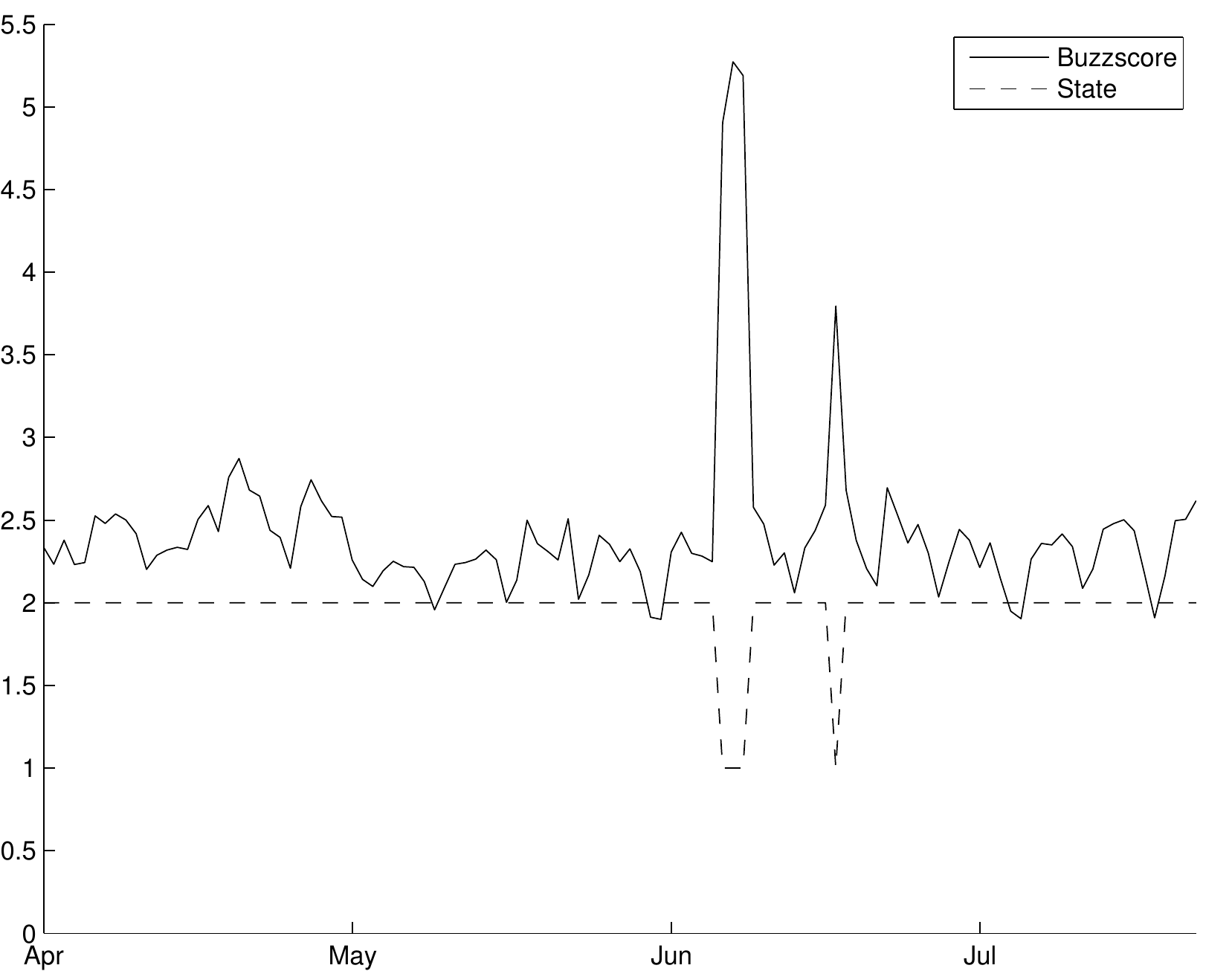}
\caption{Space discretized value of the stock \textit{SKYPE} and the scaled buzz scores during April - July is shown. It is seen that the value changed in the month of June. 
The market observer's aim is to detect the change in the value, i.e, when $x_{k} = 1$, using only the trading decisions. }
\label{fig_sim}
\end{figure}

\subsubsection{Quickest detection for SKYPE}
By eyeballing the data in Fig. \ref{fig_sim}, it is seen that the value changed during the month of June. To apply the quickest detection protocol, we consider a window from May $17$ to June $8$. It is observed that the price was (almost) constant during this period with a value close to \$13 per stock. The trading decisions (along with the value) and the public belief during this period are shown in Fig. \ref{st_p}.
\begin{figure}[!t]
\centering
\includegraphics[scale=0.4]{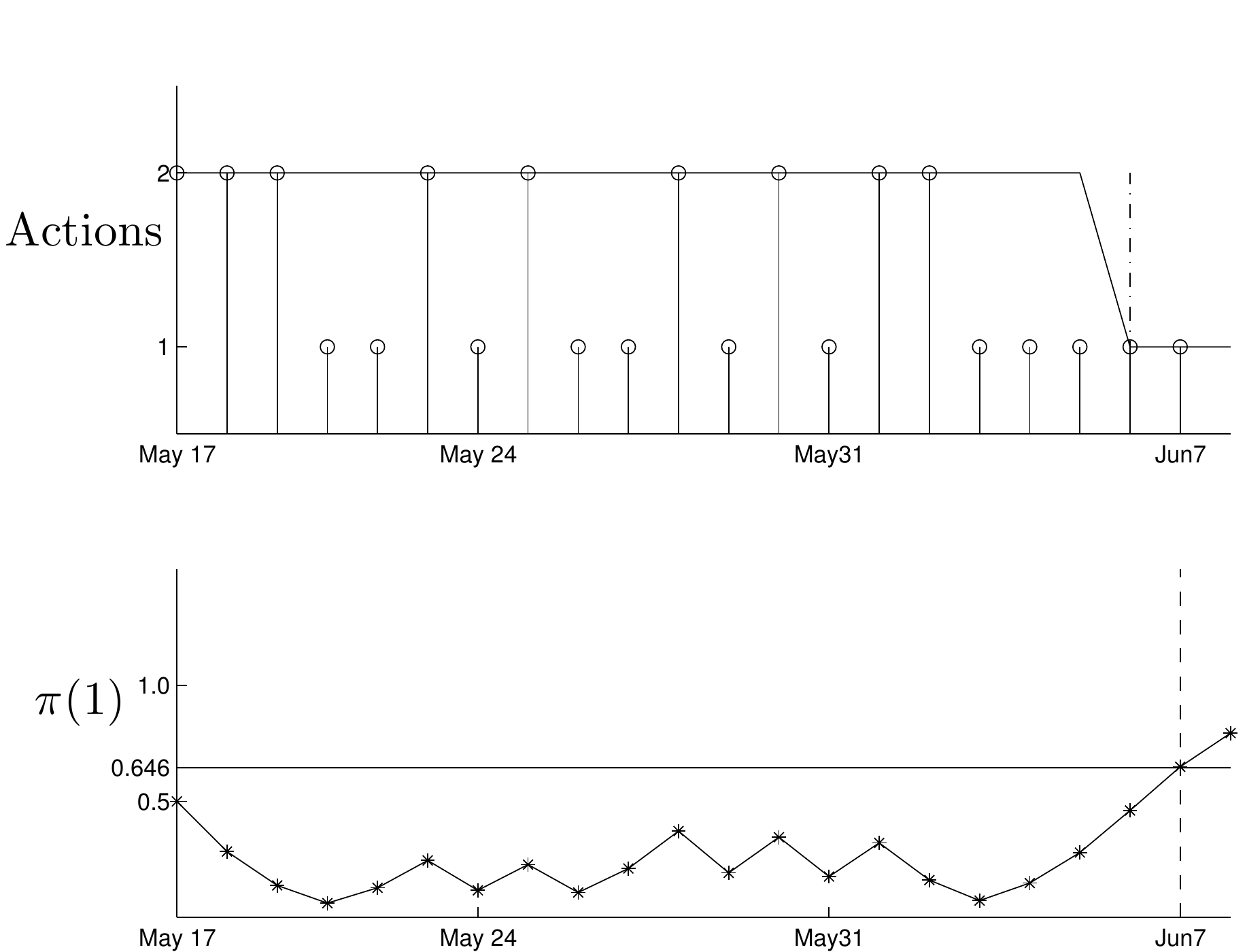}
\caption{The daily trading decisions of the agents is shown with the corresponding belief update. Here $a=1$ corresponds to buying and $a=2$ corresponds to selling the stock. Since $\pi(2) \in [0,0.354]$ is the stopping region, it corresponds to $\pi(1) \geq 0.646.$ The change was detected one day after it occurred.} 
\label{st_p}
\end{figure} 

The local and global costs for the market observer were chosen as:
\[ c = \begin{bmatrix}
0.5 & 1 \\
1 & 0.5
\end{bmatrix},
\textbf{f} = \begin{bmatrix}
0 & 2 
\end{bmatrix},
d = 0.8
\]

The model parameters were chosen as follows: \\
\[ B = \begin{bmatrix}
0.7 & 0.3 \\
0.3 & 0.7
\end{bmatrix},
P = \begin{bmatrix}
1 & 0 \\
0.04 & 0.96
\end{bmatrix}.
\]
The choice of the parameters for the observation matrix $B$ was motivated by the experimental evidence provided in \cite{BEFY14}, that when there is social learning ``alone", the trading rate was $71 \%$ based on peer effects. Since the local decision likelihood matrix $R^{\pi} = B$ in the social learning region (in our model), the parameters were so chosen. Parameters in the transition matrix $P$ were chosen to reflect the time window considered, as $\mathbb{E}\{\tau_{0}\} = 25$.

\begin{figure}[!t]
\centering
\includegraphics[scale=0.45]{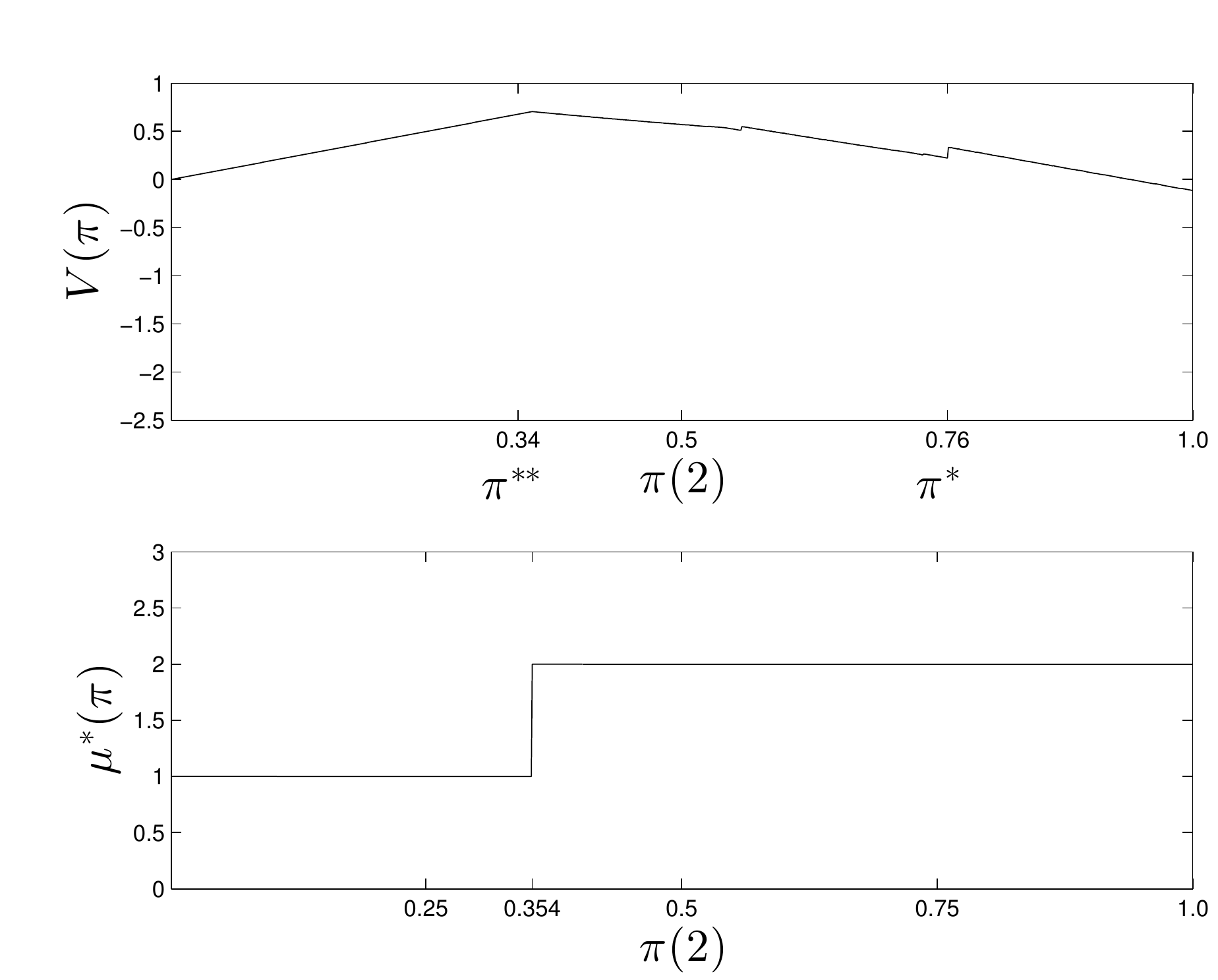}
\caption{Value function and the optimal policy plotted over $\pi(2)$ for $\alpha = 0.45$. Here $\pi^{**}$ and $\pi^{*}$ correspond to the boundary points of the social learning region. $\mu^{*}(\pi) = 1$ corresponds to stop and $\mu^{*}(\pi) = 2$ corresponds to continue. $\pi(2) \in [0,0.354]$ corresponds to the stopping region.}
\label{val_ex}
\end{figure}
It was observed that the state changed on June $6$ and for a risk-aversion factor of $\alpha = 0.45$, it was detected on June $7$.
The value function and the optimal policy for the market observer are shown in Fig. \ref{val_ex}. The stopping set corresponds to $\pi(2) \in [0,0.354]$. The regions $\pi(2) \in [0,0.34)$ and $\pi(2) \in [0.76,1]$ correspond to the regions where social learning is absent. It can be observed that the value function is discontinuous.

\subsubsection{Quickest detection for IPOD}
\begin{figure}[!t] 
\centering
\includegraphics[scale=0.4]{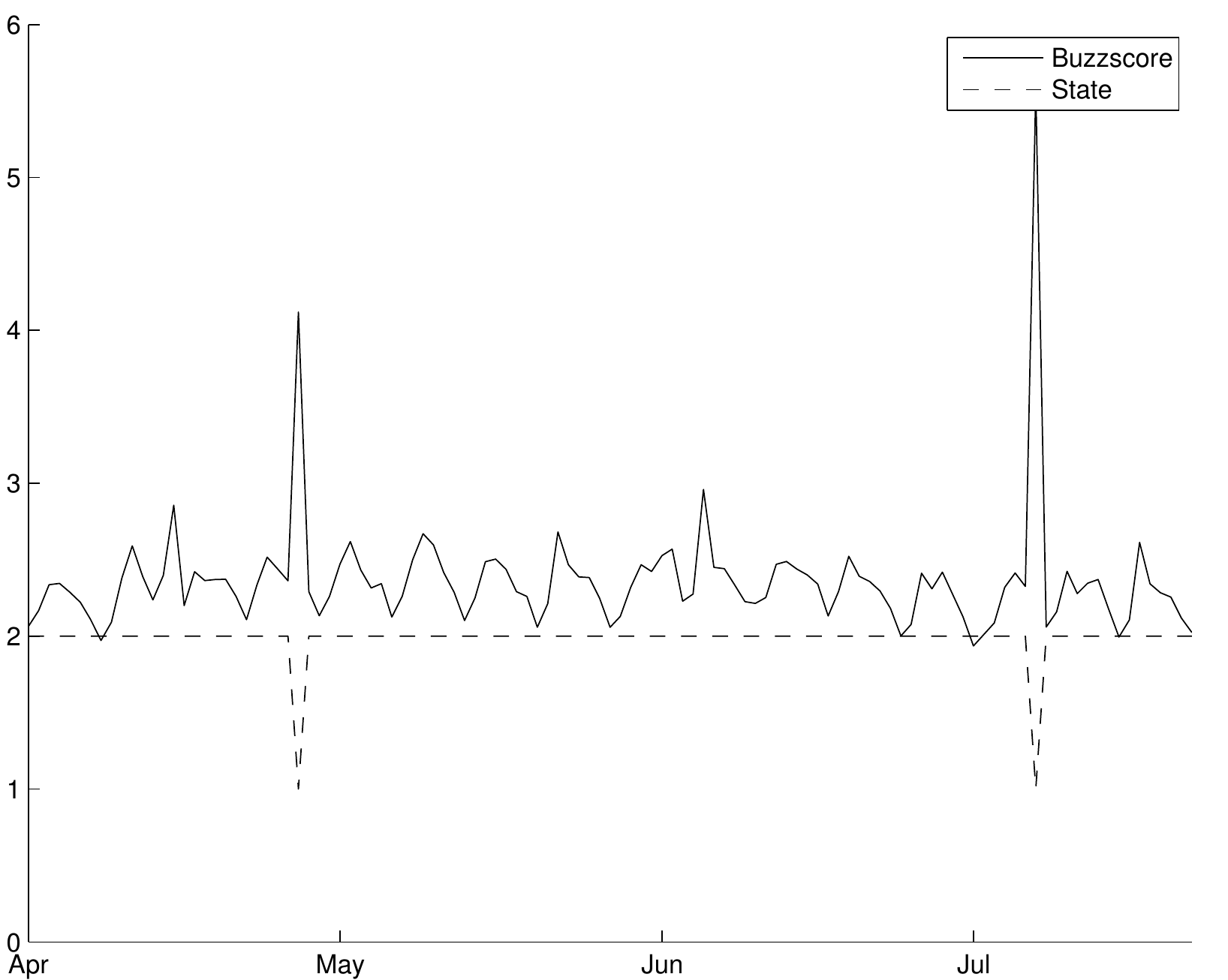}
\caption{Space discretized value of the stock \textit{IPOD} and the scaled buzz scores during April - July is shown. It is seen that the value changed during April and July. The market observer's aim is to detect the change in the value, i.e, when $x_{k} = 1$, using only the trading decisions.}
\label{fig_sim2}
\end{figure}
From Fig. \ref{fig_sim2}, it is seen that the value changed during April and July. To apply the quickest detection protocol, we consider a window from July $2$ to July $10$. It is observed that the price was (almost) constant during this period with a value close to \$17 per stock. The trading decisions (along with the value) and the public belief during this period are shown in Fig. \ref{st_p2}.
\begin{figure}[!t]
\centering
\includegraphics[scale=0.4]{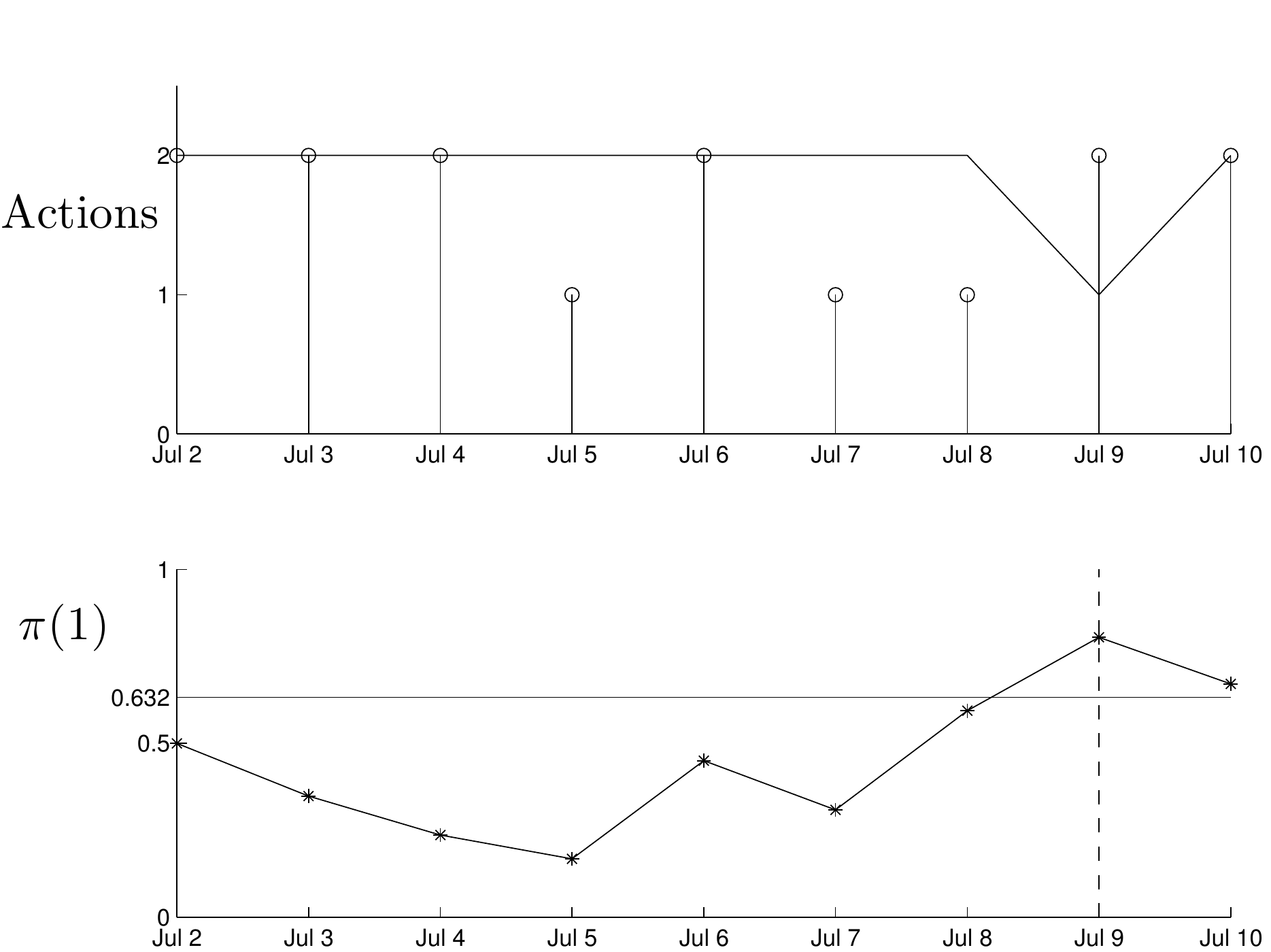}
\caption{The daily trading decisions of the agents is shown with the corresponding belief update. Here $a=1$ corresponds to buying and $a=2$ corresponds to selling the stock. Since $\pi(2) \in [0,0.368]$ is the stopping region, it corresponds to $\pi(1) \geq 0.632.$ The change was detected on the day it occurred with a higher penalty on the delay.} 
\label{st_p2}
\end{figure} 

The local and global costs for the market observer were chosen as:
\[ c = \begin{bmatrix}
0.5 & 1 \\
1 & 0.5
\end{bmatrix},
\textbf{f} = \begin{bmatrix}
0 & 1.8 
\end{bmatrix},
d = 0.95
\]

The model parameters were chosen as follows: \\
\[ B = \begin{bmatrix}
0.7 & 0.3 \\
0.3 & 0.7
\end{bmatrix},
P = \begin{bmatrix}
1 & 0 \\
0.11 & 0.89
\end{bmatrix}.
\]
Parameters in the transition matrix $P$ were chosen to reflect the time window considered, as $\mathbb{E}\{\tau_{0}\} = 9$.

\begin{figure}[!t]
\centering
\includegraphics[scale=0.45]{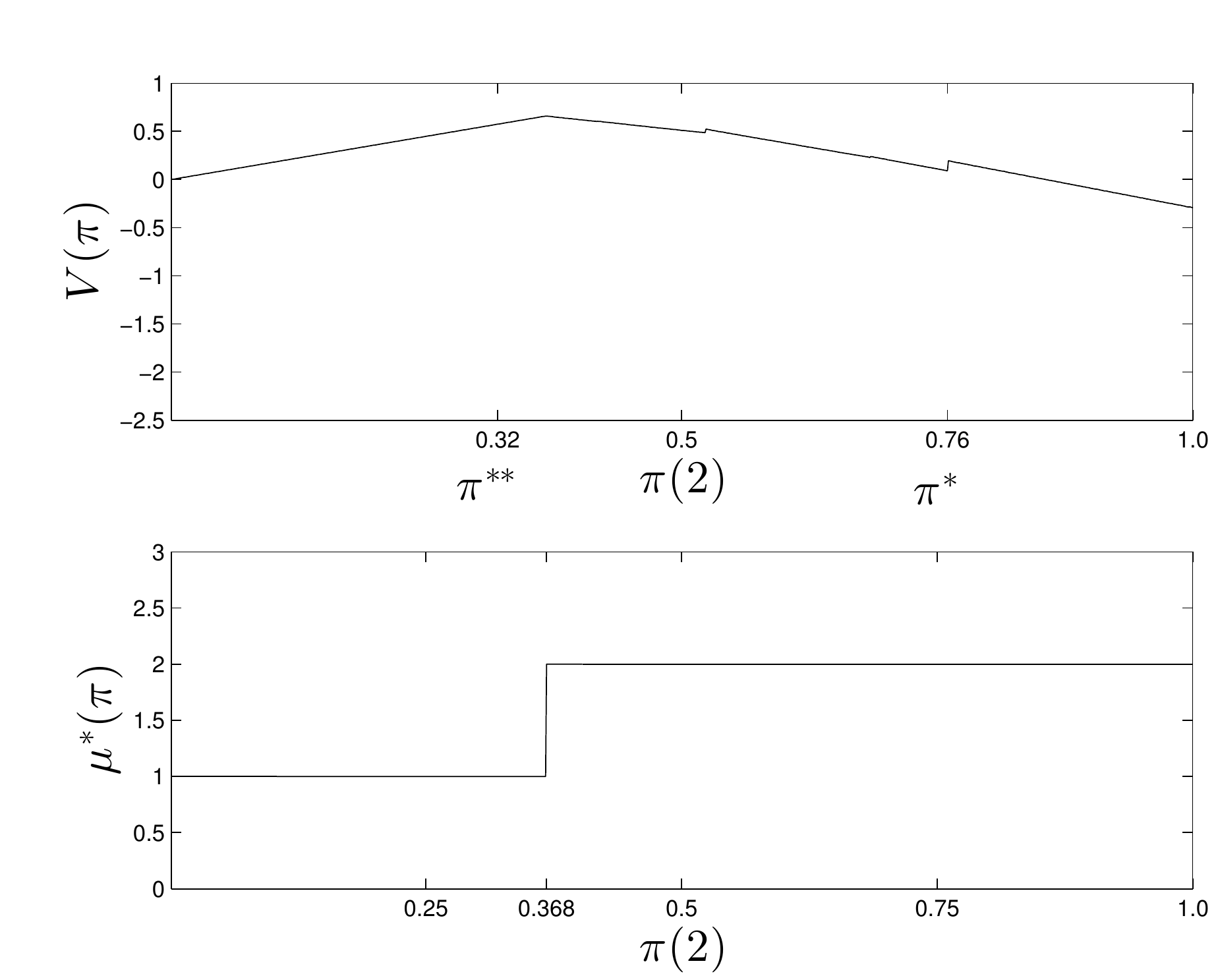}
\caption{Value function and the optimal policy plotted over $\pi(2)$ for $\alpha = 0.45$. Here $\pi^{**}$ and $\pi^{*}$ correspond to the boundary points of the social learning region. $\mu^{*}(\pi) = 1$ corresponds to stop and $\mu^{*}(\pi) = 2$ corresponds to continue. $\pi(2) \in [0,0.368]$ corresponds to the stopping region.}
\label{val_ex2}
\end{figure}
It was observed that the state changed on July $9$ and for a risk-aversion factor of $\alpha = 0.45$, it was detected on July $9$. It is seen that when the delay penalty is increased, the change is detected on the same day. The value function and the optimal policy for the market observer are shown in Fig. \ref{val_ex2}. The stopping set corresponds to $\pi(2) \in [0,0.368]$. 

\section{Conclusion}
The paper provided a Bayesian formulation of the problem of quickest detection of change in the value of a stock using the decisions of socially aware risk averse agents. It highlighted the reasons for this problem to be non-trivial - the stopping region is in general non-convex; and it also accounted for the agents' risk attitude by considering a coherent risk measure, CVaR, instead of the expected value measure in the agents' optimization problem. Results which characterize the structural properties of social learning under the CVaR risk measure were provided and the importance of these results in understanding the global behaviour was discussed. It was observed that the behaviour of these risk-averse agents is, as expected, different from risk neutral agents. Risk averse agents herd sooner and don't prefer to ``learn" from the crowd, i.e, social learning region is smaller the more risk-averse the agents are.  Finally, the model was validated using a financial dataset from Yahoo! Tech Buzz game.
From a signal processing point of view, the formulation and solutions are non-standard due to the three properties described in Section \ref{sec:intro}.

In current work, we are interested in determining structural results for the  optimal change detection. Structural results for POMDPs were developed
in \cite{KD07,KP14} and it is worthwhile extending these results to the current framework. When the market observer incurs a measurement cost,
quickest change detection often has a monotone policy as described in \cite{Kri13}. It is of interest to generalize these results to the current setup.

% if have a single appendix:
%\appendix[Proof of the Zonklar Equations]
% or
%\appendix  % for no appendix heading
% do not use \section anymore after \appendix, only \section*
% is possibly needed

% use appendices with more than one appendix
% then use \section to start each appendix
% you must declare a \section before using any
% \subsection or using \label (\appendices by itself
% starts a section numbered zero.)
%

\appendices
%\section{•}
\section{Preliminaries and Definitions:}
\begin{definition}{\textit{MLR Ordering \cite{MS02}} ($\geq_{r}$):} Let $\pi_{1},\pi_{2}\in \Pi(X)$ be any two belief state vectors. Then $\pi_{1}\geq_{r}\pi_{2}$ if 
\begin{equation*} \label{def:mlrdef}
\pi_{1}(i)\pi_{2}(j) \leq \pi_{2}(i)\pi_{1}(j), ~ i<j, i,j \in \{1,\hdots,X\}.
\end{equation*}
\end{definition}
\begin{definition}{\textit{First-Order Stochastic Dominance} ($\geq_{s}$):} Let $\pi_{1},\pi_{2}\in \Pi(X)$ be any two belief state vectors. Then $\pi_{1}\geq_{s}\pi_{2}$ if 
\begin{equation*}
{\overset{X}{\underset{i=j}{\sum}}} \pi_{1}(i) \geq {\overset{X}{\underset{i=j}{\sum}}} \pi_{2}(i) ~ \text{for} ~ j \in \{1,\hdots,X\}.
\end{equation*}  
\end{definition}

\begin{lemma}{\cite{MS02}} \label{lem:dec}
$\pi_{2} \geq_{s} \pi_{1}$ iff for all $v\in \mathcal{V}$, $v'\pi_{2}\leq v'\pi_{1}$, where $\mathcal{V}$ denotes the space of $X$- dimensional vectors $v$, with non-increasing components, i.e, $v_{1} \geq v_{2} \geq \hdots v_{X}$.
\end{lemma}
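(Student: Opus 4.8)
The plan is to prove both implications by summation by parts (Abel summation), which converts each statement about the weighted inner product $v'\pi$ into a statement about the tail sums that define first-order stochastic dominance. First I would introduce, for any belief vector $\pi \in \Pi(X)$, the tail sums $S_j(\pi) = \sum_{i=j}^X \pi(i)$, noting that $S_1(\pi) = 1$ because $\pi$ is a probability vector, and that $\pi(i) = S_i(\pi) - S_{i+1}(\pi)$ with the convention $S_{X+1}(\pi) = 0$. By the definition of $\geq_s$, the claim $\pi_2 \geq_s \pi_1$ is exactly the family of inequalities $S_j(\pi_2) \geq S_j(\pi_1)$ for all $j \in \{1,\ldots,X\}$, so the whole lemma reduces to relating these tail sums to inner products against non-increasing vectors.

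The key computation is to rewrite $v'\pi = \sum_{i=1}^X v_i \pi(i)$ by substituting $\pi(i) = S_i(\pi) - S_{i+1}(\pi)$ and collapsing the telescoping sum. Careful bookkeeping of the boundary terms gives $v'\pi = v_1 + \sum_{i=2}^X (v_i - v_{i-1}) S_i(\pi)$, using $S_1(\pi) = 1$ and $S_{X+1}(\pi) = 0$. Subtracting the two belief vectors then yields the clean identity $v'\pi_1 - v'\pi_2 = \sum_{i=2}^X (v_i - v_{i-1})\,(S_i(\pi_1) - S_i(\pi_2))$, in which the leading $v_1$ contributions cancel. This identity is the engine for both directions.

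For the forward direction, I would observe that $v \in \mathcal{V}$ forces $v_i - v_{i-1} \leq 0$ for every $i$, while $\pi_2 \geq_s \pi_1$ forces $S_i(\pi_1) - S_i(\pi_2) \leq 0$. Each summand in the identity is therefore a product of two non-positive quantities, hence non-negative, so $v'\pi_1 - v'\pi_2 \geq 0$, i.e. $v'\pi_2 \leq v'\pi_1$. For the converse I would test the hypothesis against the step vectors $v^{(j)}$ defined by $v^{(j)}_i = 1$ for $i < j$ and $v^{(j)}_i = 0$ for $i \geq j$; each such vector lies in $\mathcal{V}$ since it is non-increasing, and a direct evaluation gives $(v^{(j)})'\pi = 1 - S_j(\pi)$. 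Consequently the assumed inequality $(v^{(j)})'\pi_2 \leq (v^{(j)})'\pi_1$ is precisely $S_j(\pi_2) \geq S_j(\pi_1)$; ranging over $j = 2,\ldots,X$ (the case $j=1$ being automatic since $S_1 \equiv 1$) recovers $\pi_2 \geq_s \pi_1$.

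The only genuinely delicate point is the index bookkeeping in the summation-by-parts step: one must track the boundary terms carefully so that the vanishing $S_{X+1}$ term drops and the isolated $v_1 S_1$ term is correctly separated, ensuring the $v_1$ contributions cancel in the difference. Once that telescoping identity is established exactly, both directions follow by simply reading off signs and, for the converse, by exhibiting the step-vector family, so I expect no substantive obstacle beyond verifying that algebraic rearrangement.
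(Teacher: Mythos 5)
Your argument is correct. The summation-by-parts identity $v'\pi = v_1 + \sum_{i=2}^{X}(v_i - v_{i-1})S_i(\pi)$ is derived with the right boundary conventions ($S_1 \equiv 1$, $S_{X+1} \equiv 0$), the sign analysis in the forward direction is sound, and the step vectors $v^{(j)}$ are a valid test family for the converse (noting, as you do, that $j=1$ is vacuous). Note that the paper itself offers no proof of this lemma --- it is stated as a citation to a standard reference on stochastic orders --- so there is nothing to compare against; your Abel-summation argument is the standard textbook proof of this characterization of first-order stochastic dominance and fills the gap in a self-contained way.
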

\begin{lemma}{\cite{MS02}} \label{lem:inc}
$\pi_{2} \geq_{s} \pi_{1}$ iff for all $v\in \mathcal{V}$, $v'\pi_{2}\geq v'\pi_{1}$, where $\mathcal{V}$ denotes the space of $X$- dimensional vectors $v$, with non-decreasing components, i.e, $v_{1} \leq v_{2} \leq \hdots v_{X}$.
\end{lemma}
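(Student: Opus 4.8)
The plan is to reduce the two-sided equivalence to a single summation-by-parts (Abel) identity that links the inner product $v'\pi$ to the tail sums appearing in the definition of first-order stochastic dominance. First I would introduce, for any $\pi \in \Pi(X)$, the tail sums $S_j(\pi) = \sum_{i=j}^{X} \pi(i)$ for $j = 1,\ldots,X$, with the convention $S_{X+1}(\pi) = 0$, so that $\pi(i) = S_i(\pi) - S_{i+1}(\pi)$ and the dominance condition $\pi_2 \geq_s \pi_1$ reads simply as $S_j(\pi_2) \geq S_j(\pi_1)$ for every $j$.

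The key step is the Abel summation identity. Writing $v'\pi = \sum_{i=1}^{X} v_i\bigl(S_i(\pi) - S_{i+1}(\pi)\bigr)$ and re-indexing the shifted sum gives
\[
v'\pi = v_1 S_1(\pi) + \sum_{i=2}^{X} (v_i - v_{i-1})\, S_i(\pi).
\]
Since $\pi$ lies on the simplex, $S_1(\pi) = \mathbf{1}'\pi = 1$, so forming the difference for $\pi_2$ and $\pi_1$ cancels the $v_1$ term and yields
\[
v'\pi_2 - v'\pi_1 = \sum_{i=2}^{X} (v_i - v_{i-1})\,\bigl(S_i(\pi_2) - S_i(\pi_1)\bigr).
\]
This identity is the engine for both directions of the proof.

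For the forward direction, if $\pi_2 \geq_s \pi_1$ then each tail-sum difference $S_i(\pi_2) - S_i(\pi_1)$ is non-negative, while $v \in \mathcal{V}$ means each increment $v_i - v_{i-1}$ is non-negative; hence every summand above is non-negative and $v'\pi_2 \geq v'\pi_1$. For the converse I would test the hypothesis against the step vectors $v^{(j)}$ defined by $v^{(j)}_i = 0$ for $i < j$ and $v^{(j)}_i = 1$ for $i \geq j$. Each such vector is non-decreasing, hence lies in $\mathcal{V}$, and satisfies $(v^{(j)})'\pi = S_j(\pi)$. Applying the assumed inequality to $v = v^{(j)}$ yields $S_j(\pi_2) \geq S_j(\pi_1)$ for every $j$, which is exactly $\pi_2 \geq_s \pi_1$.

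I do not anticipate a genuine obstacle here: the result is elementary, and the only subtle point is the correct choice of extremal test vectors in the converse, where the step functions $v^{(j)}$ serve as the extreme rays of the cone $\mathcal{V}$. As an even shorter route I would note that the statement follows in one line from the companion Lemma \ref{lem:dec}: a vector $v$ has non-decreasing components iff $-v$ has non-increasing components, and $v'\pi_2 \geq v'\pi_1$ iff $(-v)'\pi_2 \leq (-v)'\pi_1$, so applying Lemma \ref{lem:dec} to $-v$ delivers the claim immediately.
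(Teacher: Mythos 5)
Your proof is correct, but there is nothing in the paper to compare it against: Lemma \ref{lem:inc} is stated with a citation to \cite{MS02}, and the paper supplies no proof of it (the same is true of the companion Lemma \ref{lem:dec}). Judged on its own, your summation-by-parts argument is the standard and complete derivation: the identity $v'\pi_2 - v'\pi_1 = \sum_{i=2}^{X}(v_i - v_{i-1})\bigl(S_i(\pi_2) - S_i(\pi_1)\bigr)$ is valid precisely because $S_1(\pi_2)=S_1(\pi_1)=1$ on the simplex and $S_{X+1}=0$, the forward direction is then immediate from the sign pattern, and the step vectors $v^{(j)}$ are exactly the right test functions for the converse (one could quibble that $\mathcal{V}$ also contains the constant vectors, so the $v^{(j)}$ alone do not span all extreme rays of the cone, but your argument never needs that — testing against the $v^{(j)}$ recovers all tail-sum inequalities, which is all the converse requires). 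Your one-line alternative, applying Lemma \ref{lem:dec} to $-v$, is also valid and is presumably how the authors view the two lemmas, as mirror images of a single result; but it is only as self-contained as Lemma \ref{lem:dec} itself, which the paper likewise leaves to \cite{MS02}. So if the aim is a proof from first principles, the Abel-summation route is the one to keep, and the reduction to Lemma \ref{lem:dec} is best mentioned as a remark.
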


Let $\Pi(X) {\overset{\Delta}{=}} \{ \pi\in\mathbb{R}^{X} : \textbf{1}_{X}'\pi = 1, 0\leq \pi(i) \leq 1 ~ \text{for all} ~ i\in \mathcal{X} \}$
\begin{definition}{\textit{Submodular function} \cite{Top98}:} A function $f : \Pi(X)\times \{1,2\} \rightarrow \mathbb{R}$ is submodular if $f(\pi,u)-f(\pi,\bar{u}) \leq f(\bar{\pi},u)-f(\bar{\pi},\bar{u})$, for $\bar{u}\leq u, \pi \geq_{r} \bar{\pi}$. 
\end{definition}
\begin{definition}{\textit{Single Crossing Condition} \cite{Top98}:} A function $g : \mathcal{Y} \times \mathcal{A} \rightarrow \mathbb{R}$ satisfies a single crossing condition in $(y,a)$ if 
\begin{equation*}
g(y,a) - g(y,\bar{a}) \geq 0 \Rightarrow g(\bar{y},a) - g(\bar{y},\bar{a}) \geq 0
\end{equation*}
for $\bar{a}>a$ and $\bar{y}>y$. For any such function $g$ , 
\begin{equation} \label{eq:sgc}
a^{*}(y) = {\underset{a}{\text{argmin}}} ~ g(y,a) ~ \text{is increasing in} ~ y.
\end{equation}
\end{definition}
\begin{theorem}{\cite{Top98}} \label{thm:sb}
If $f : \Pi(X) \times \{1,2\} \rightarrow \mathbb{R}$ is sub-modular, then there exists a $u^{*}(\pi) = {\underset{u\in \{1,2\}}{\text{argmin}}} f(\pi,u)$ satisfying,
\begin{equation*}
\bar{\pi} \geq_{r} \pi \Rightarrow u^{*}(\pi) \leq u^{*}(\bar{\pi})
\end{equation*}
\end{theorem}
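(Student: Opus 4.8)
The plan is to exploit the fact that the action set $\{1,2\}$ is binary, which collapses the two-variable submodularity hypothesis into a one-dimensional monotonicity statement, after which the conclusion follows by a short contradiction argument. No lattice-theoretic machinery is needed once the problem is scalarized in this way.

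First I would introduce the scalar difference $\Delta(\pi) := f(\pi,2)-f(\pi,1)$ and fix the minimizing selection $u^{*}(\pi)=2$ whenever $\Delta(\pi)\leq 0$ and $u^{*}(\pi)=1$ whenever $\Delta(\pi)>0$. Since $\Delta(\pi)\leq 0$ is precisely the statement $f(\pi,2)\leq f(\pi,1)$, this rule returns an element of the argmin of $f(\pi,\cdot)$ over $\{1,2\}$ in every case, so it is a legitimate minimizing selection; the tie-breaking convention of favouring the larger action on equality is what makes the selection itself monotone, rather than merely one of several possible minimizers.

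The key step is to read off monotonicity of $\Delta$ directly from the submodularity inequality $f(\pi,u)-f(\pi,\bar u)\leq f(\bar\pi,u)-f(\bar\pi,\bar u)$, which holds for $\bar u\leq u$ and $\pi\geq_{r}\bar\pi$. Taking $u=2$ and $\bar u=1$ yields $\Delta(\pi)\leq\Delta(\bar\pi)$ whenever $\pi\geq_{r}\bar\pi$, i.e.\ $\Delta$ is antitone in the MLR order $\geq_{r}$. I would then close by contradiction: suppose $\bar\pi\geq_{r}\pi$ yet $u^{*}(\pi)>u^{*}(\bar\pi)$, so that $u^{*}(\pi)=2$ and $u^{*}(\bar\pi)=1$. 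The first gives $\Delta(\pi)\leq 0$, the second gives $\Delta(\bar\pi)>0$, while antitonicity applied with the larger element $\bar\pi$ gives $\Delta(\bar\pi)\leq\Delta(\pi)\leq 0$, contradicting $\Delta(\bar\pi)>0$. Hence $u^{*}(\pi)\leq u^{*}(\bar\pi)$, which is exactly the asserted monotonicity of the minimizing selection.

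I expect no serious analytic obstacle; the entire content lies in two bookkeeping points. The first is correctly orienting the MLR inequality against the sign convention of the submodularity definition, so that ``larger belief'' maps to ``smaller $\Delta$'' and hence to ``weakly larger optimal action''. The second, and the only point genuinely demanding care, is that the theorem asserts the \emph{existence} of a monotone minimizer rather than monotonicity of the entire argmin correspondence: the argument must therefore exhibit the explicit selection $u^{*}$ above and verify it is well defined, instead of reasoning abstractly about argmin-valued maps, for which monotonicity would fail on the tie set $\{\pi:\Delta(\pi)=0\}$.
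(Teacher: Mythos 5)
Your proof is correct. The paper itself offers no proof of this statement --- it is quoted from Topkis \cite{Top98} and used as a black box --- so there is nothing to compare against line by line; your argument is the standard monotone-comparative-statics proof specialized to a binary action set. The reduction to the scalar difference $\Delta(\pi)=f(\pi,2)-f(\pi,1)$, the observation that the paper's submodularity definition says exactly that $\Delta$ is antitone in $\geq_{r}$, and the explicit tie-breaking selection ($u^{*}=2$ on $\{\Delta\leq 0\}$) that makes the \emph{existence} clause of the theorem honest are all handled correctly; the contradiction step is airtight. The only thing I would add is a one-line remark that on the tie set any other selection may break monotonicity, which you already flag informally --- it is precisely why the theorem is phrased as ``there exists a $u^{*}$'' rather than as a property of the whole argmin correspondence.
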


\section{Proofs}
The following lemmas are required to prove Theorem \ref{thm:a} and Theorem \ref{thm:p}. The results will be proved for general state and observation spaces having two actions. 
\begin{lemma} \label{lem:z}
For a finite state and observation alphabet, $\armz \{  z + \frac{1}{\zeta} \mathbb{E}_{y}[{\max} \{ (c(x,a)-z),0 \}]  \}$ is equal to $c(i,a)$ for some $i\in\{1,2,\hdots,X\}$.
\end{lemma}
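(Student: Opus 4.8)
The plan is to exploit the fact that, for a finite support, the Rockafellar--Uryasev inner objective is a convex, piecewise-linear function of $z$ whose only kinks occur at the loss values $c(i,a)$, so its minimum is necessarily attained at one such kink. Writing $\eta(i) = \Prb(x=i\mid\mathcal{H}_{k})$ for the private belief, the objective becomes
\begin{equation*}
g(z) = z + \frac{1}{\zeta}\sum_{i=1}^{X}\eta(i)\,\max\{c(i,a)-z,0\},
\end{equation*}
a finite sum of convex functions of $z$ and hence convex; each summand $\max\{c(i,a)-z,0\}$ is affine except for a single kink at $z=c(i,a)$, so $g$ is piecewise linear with breakpoints contained in $\mathcal{B}=\{c(i,a):i\in\{1,\dots,X\}\}$.

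First I would compute the one-sided slopes of $g$ away from the breakpoints. For $z\notin\mathcal{B}$,
\begin{equation*}
g'(z) = 1 - \frac{1}{\zeta}\,\Prb\big(c(x,a)>z\mid\mathcal{H}_{k}\big),
\end{equation*}
which is non-decreasing in $z$ because the tail probability is non-increasing; this re-confirms convexity. Next I would read off the boundary behaviour: for $z$ below $\min_{i}c(i,a)$ the tail probability equals $1$, so $g'(z)=1-1/\zeta\le 0$ (using $\zeta\in(0,1]$), while for $z$ above $\max_{i}c(i,a)$ the tail probability is $0$ and $g'(z)=1>0$. Hence the slope passes from non-positive to positive, the minimum is attained at a finite point, and $g$ is coercive for $\zeta<1$ (and non-decreasing with a flat left tail when $\zeta=1$).

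The conclusion then follows from the elementary fact that a convex piecewise-linear function attains its minimum at a breakpoint: let $z^{*}$ be the smallest element of $\mathcal{B}$ with $\Prb(c(x,a)>z^{*}\mid\mathcal{H}_{k})\le\zeta$, equivalently the smallest breakpoint at which the right slope $g'(z^{*}+)$ is non-negative. Since $z^{*}$ is the smallest such breakpoint, the slope on the segment immediately to its left is negative, so $g'(z^{*}-)\le 0$; combined with $g'(z^{*}+)\ge 0$ this places $0$ in the subdifferential of $g$ at $z^{*}$, whence $z^{*}\in\mathcal{B}$ is a global minimizer and $z^{*}=c(i,a)$ for some $i$.

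The main obstacle, and the only point requiring care, is the possibility of a flat minimizing segment (non-uniqueness of the argmin), which arises exactly when $\Prb(c(x,a)>z^{*}\mid\mathcal{H}_{k})=\zeta$ for some breakpoint $z^{*}$. In that degenerate case the argmin is an interval whose endpoints are consecutive breakpoints; the claim still holds because the endpoint $z^{*}\in\mathcal{B}$ lies in this interval, so one may always select a minimizer of the required form $c(i,a)$. I would also remark that this $z^{*}$ is precisely $\text{VaR}_{\zeta}$ of the loss $c(x,a)$ under the private belief, which for a discrete distribution is supported on $\{c(i,a)\}$, giving a one-line alternative justification via the known identity between the inner minimizer and Value-at-Risk.
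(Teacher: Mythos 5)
Your proposal is correct and takes essentially the same route as the paper: both analyze the slope of the convex Rockafellar--Uryasev objective, $1-\frac{1}{\zeta}\Prb\big(c(x,a)>z\mid\mathcal{H}_k\big)$, and locate where it changes sign among the breakpoints $\{c(i,a)\}$. If anything, your version is tighter than the paper's, which differentiates formally and asserts the minimizer solves $\Prb(c(x,a)>z)=\zeta$ exactly --- an equation that can fail to have a solution for a discrete law --- whereas your one-sided slopes, subdifferential condition, and treatment of the flat-segment/non-unique-argmin case close precisely that gap.
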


\begin{proof}
Let $\eta_{y}$ be the belief update (p.m.f) with observation $y$, i.e, $\eta_{y}(i) = \Prby(x=i)$. Let $F_{y}(x)$ denote the cumulative distribution function. For simplicity of notation, let $h_{y}(z) = z+\frac{1}{\alpha}\E_{y}[\ma]$. The extremum of $h_{y}(z)$ is attained where the derivative is zero. It is obtained as follows.

%\begin{strip}
%\lipsum[1]
\resizebox{1.0\linewidth}{!} {
  \begin{minipage}{\linewidth}
\begin{align*}
h_{y}(z) &= z+\frac{1}{\alpha}\E_{y}[\ma] \\
h_{y}'(z) &= 1 + \frac{1}{\alpha} \lia \frac{\E_{y}[\max\{c(x,a) - z -\Delta z,0\}] - \E_{y}[\ma]}{\Delta z} \\
  &= 1 + \frac{1}{\alpha} \E_{y} \left( \lia \frac{\max\{c(x,a) - z -\Delta z,0\} - \ma}{\Delta z}\right) \\
  &= 1 + \frac{1}{\alpha} \E_{y} \left( 0 \times \mathcal{I}_{0 > (c(x,a) - z)} - 1 \times \mathcal{I}_{(c(x,a) - z) > 0} \right) \\
  &= 1 - \frac{1}{\alpha} \Prby(c(x,a)>z).
\end{align*}
%\end{strip}
\end{minipage} }

%\lipsum
Also, $h_{y}''(z) = \frac{1}{\alpha} \dfrac{d}{dz}(F_{y}(z))$ and therefore $h_{y}''(z) \geq 0$. We have, $\armz~ \{ h_{y}(z) \} = \{ z : \Prby(c(x,a)>z) = \alpha \}$. Since $X$ is a random variable, $c(x,a)$ is a random variable with realizations $c(i,a)$ for $i \in \{ 1,\hdots X \}$. Hence $z = c(i,a)$ for some $i\in\{1,2,\hdots,X\}$.
\end{proof}
The result of Lemma \ref{lem:z} is similar to Proposition $8$ in \cite{RU02}. It was shown in \cite{Lov87} that $\eta_{y+1} \geq_{r} \eta_{y}$. Also, MLR dominance implies first order dominance, i.e, $\eta_{y+1} \geq_{s} \eta_{y}$. 

\begin{lemma} \label{lem:kl}
 Let $l$ be the index such that $\armz \{  h_{y}(z)  \} = c(l,a)$ and $k$ be the index such that $\armz \{  h_{y+1}(z)  \} = c(k,a)$. For all $y\in \{ 1,2\hdots,Y \}$,  $k \geq l$. 
\end{lemma}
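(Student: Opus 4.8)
The plan is to read the minimizing index off as a quantile of the cost distribution and then push that quantile upward using the stochastic dominance of the successive belief updates. First I would invoke Lemma \ref{lem:z}: the first-order analysis there gives $h_y'(z) = 1 - \frac{1}{\alpha}\Prby(c(x,a) > z)$ with $h_y''(z) \ge 0$, so $h_y$ is convex and its minimizer $z^{*}_y = c(l,a)$ is the smallest value $z$ at which $\Prby(c(x,a) > z)$ drops to $\alpha$ or below. In other words, the index $l$ is precisely the upper $\alpha$-quantile (the VaR) of the discrete random variable $c(x,a)$ under the private belief $\eta_y$, and likewise $c(k,a)$ is its VaR under $\eta_{y+1}$.

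The second ingredient is the stochastic ordering of the two beliefs. As already recorded in the text, $\eta_{y+1} \geq_r \eta_y$ by the TP2 assumption (A1) together with \cite{Lov87}, and MLR dominance implies first-order dominance, so $\eta_{y+1} \geq_s \eta_y$. By the definition of $\geq_s$ this means the tail sums dominate pointwise: $\sum_{i \geq j}\eta_{y+1}(i) \geq \sum_{i \geq j}\eta_y(i)$ for every $j$. This is the only probabilistic input I need.

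Now I would combine the two. Writing the cost as a monotone function of the state (increasing in $i$, the decreasing case being symmetric), I have $\Prby(c(x,a) > c(m,a)) = \sum_{i > m}\eta_y(i)$, so the minimizing index is the threshold $l = \min\{m : \sum_{i>m}\eta_y(i) \leq \alpha\}$, and similarly $k = \min\{m : \sum_{i>m}\eta_{y+1}(i) \leq \alpha\}$. The tail-sum domination shows that $\{m : \sum_{i>m}\eta_{y+1}(i)\leq\alpha\}$ is contained in $\{m : \sum_{i>m}\eta_y(i)\leq\alpha\}$; since both tail sums are nonincreasing in $m$ these sets are upper rays, and containment of the smaller ray forces its left endpoint to be no smaller, i.e.\ $k \geq l$. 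When $c(\cdot,a)$ is decreasing the same steps run with the head sums $\sum_{i<m}\eta_y(i)$, which FOSD now dominates in the reverse direction, and the conclusion $k \geq l$ is recovered.

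The step I expect to be the main obstacle is making this quantile comparison airtight in the discrete setting: because $c(x,a)$ has a step distribution, the minimizer lands exactly at a jump point $c(l,a)$, and I must verify that the tail-sum threshold selects precisely that index rather than an adjacent one, including the boundary behaviour when a tail sum equals $\alpha$ exactly. The accompanying subtlety is bookkeeping on the cost ordering: the sign of the monotonicity of $c(\cdot,a)$ flips tails into heads and reverses the direction in which FOSD acts, yet both reversals compose to preserve $k \geq l$. I would isolate this as a short case distinction so that the quantile argument itself stays clean.
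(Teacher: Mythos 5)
Your argument is correct and takes essentially the same route as the paper's: both read the minimizer from Lemma \ref{lem:z} as the upper $\alpha$-quantile of the cost under the private belief and then transfer the first-order dominance $\eta_{y+1}\geq_s\eta_y$ (implied by the MLR ordering) to the quantile index, the paper merely packaging the comparison as a short proof by contradiction via the cumulative distribution functions $F_y$ and $F_{y+1}$. If anything you are more careful on the two points you flag yourself --- the paper asserts the exact equality $F_y(c(l,a))=1-\alpha$, which need not hold for a discrete cost distribution, and it implicitly treats $c(\cdot,a)$ as monotone in the state without the case distinction you supply.
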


\begin{proof}
Proof is by contradiction.  From lemma \eqref{lem:z}, we have $F_{y}(c(l,a)) = 1-\alpha$ and $F_{y+1}(c(k,a)) = 1-\alpha$. Suppose $l>k$. We know that $F_{y+1}(z)$ is a monotone function in $z$. Since $l>k$, $F_{y+1}(c(l,a)) > 1-\alpha$. But, by definition of first order stochastic dominance, $F_{y}(z) \geq F_{y+1}(z)$ for all $z$. Therefore, $F_{y}(c(l,a))\geq F_{y+1}(c(l,a)) > 1-\alpha$, a contradiction.
\end{proof}
From lemma \ref{lem:z} and equation \eqref{eq:hya}, we have 
\begin{align*}
H^{\alpha}(y,2) = c(l,2) + \frac{1}{\alpha} \sul \eta_{y}(i) (c(i,2)-c(l,2)), \\
H^{\alpha}(y+1,2) = c(k,2) + \frac{1}{\alpha} \sk \eta_{y+1}(i) (c(i,2)-c(k,2)) \\
\end{align*}
\begin{lemma} \label{lem:hy1}
$H^{\alpha}(y,2) \geq H^{\alpha}(y+1,2)$ if $\alpha \geq 1 - \Prby(x=X)$.
\end{lemma}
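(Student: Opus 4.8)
The plan is to show that the stated bound $\alpha \geq 1 - \Prby(x=X)$ is precisely the condition that pins the Value-at-Risk index $l$ (defined in Lemma~\ref{lem:kl}) to the lowest-cost state $X$, which collapses the comparison of the two CVaR costs into a single first-order stochastic dominance inequality.

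First I would determine $l$ under the hypothesis. Recall from Lemma~\ref{lem:z} that the minimizer of $h_y$ is $c(l,2)$, characterised by $\Prby(c(x,2)>c(l,2)) = \alpha$, and that $h_y$ is convex with $h_y'(z) = 1 - \frac{1}{\alpha}\Prby(c(x,2)>z)$. Since the sell cost $c(\cdot,2)$ is non-increasing in the state (the ordering underlying the displayed CVaR expressions, whose tail is the low-index block $\{1,\dots,l-1\}$), state $X$ carries the smallest loss, so $\Prby(c(x,2)>c(X,2)) = 1-\Prby(x=X)$. The hypothesis $\alpha \geq 1-\Prby(x=X)$ therefore makes $h_y'(c(X,2)) \geq 0$, while $h_y'(z) = 1-1/\alpha < 0$ for every $z<c(X,2)$; by convexity the minimum sits at $z = c(X,2)$, i.e. $l = X$. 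Invoking Lemma~\ref{lem:kl}, $k \geq l = X$, and since $k \leq X$ we conclude $k = X$ as well.

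With $l = k = X$ the two expressions for $H^{\alpha}(y,2)$ and $H^{\alpha}(y+1,2)$ share the same Value-at-Risk anchor $c(X,2)$ and the same summation range, so subtracting them cancels the boundary term and leaves
\[
H^{\alpha}(y,2) - H^{\alpha}(y+1,2) = \frac{1}{\alpha}\sum_{i=1}^{X-1}\big(\eta_{y}(i)-\eta_{y+1}(i)\big)\big(c(i,2)-c(X,2)\big).
\]
To sign this I would set $v_i = c(i,2)-c(X,2)$, which has non-increasing components (because $c(\cdot,2)$ is non-increasing) and satisfies $v_X = 0$. Using $\eta_{y+1}\geq_s \eta_y$ (recorded just before Lemma~\ref{lem:kl}) together with Lemma~\ref{lem:dec}, we get $v'\eta_{y+1}\leq v'\eta_{y}$, so $\sum_{i=1}^{X} v_i\big(\eta_y(i)-\eta_{y+1}(i)\big)\geq 0$; the $i=X$ term vanishes, which gives exactly the sum above and hence $H^{\alpha}(y,2)\geq H^{\alpha}(y+1,2)$.

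The main obstacle, and the conceptual crux, is the first step: recognising that the sole role of the assumption $\alpha \geq 1-\Prby(x=X)$ is to force both quantile indices to coincide at $X$. Once $l=k$, monotonicity is immediate from stochastic dominance via Lemma~\ref{lem:dec}; the difficulty the condition circumvents is that when $l \neq k$ the two anchors $c(l,2)\neq c(k,2)$ no longer cancel, so the clean one-line dominance argument breaks and a more delicate comparison would be required. A secondary point to state carefully is the orientation of Lemma~\ref{lem:dec}: it applies here because the sell-cost vector \emph{decreases} in the state, making $v$ non-increasing, which is exactly the hypothesis of that lemma.
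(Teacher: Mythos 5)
Your proof is correct, but it takes a genuinely different route from the paper's. The paper keeps the two quantile indices $l$ and $k$ general (using only $k\geq l$ from Lemma~\ref{lem:kl}), splits the two tail sums, replaces $\eta_{y+1}$ by $\eta_{y}$ via Lemma~\ref{lem:dec}, and then bounds the residual term $\frac{1}{\alpha}\Gamma'\eta_{y}$ by its worst case ($l=1$, $k=X$) to arrive at $H^{\alpha}(y,2)-H^{\alpha}(y+1,2)\geq \frac{\alpha-(1-\Prby(x=X))}{\alpha}\left(c(l,2)-c(k,2)\right)\geq 0$. You instead observe that the hypothesis $\alpha\geq 1-\Prby(x=X)$ already pins the minimizing index to $l=X$ (and hence $k=X$, either via Lemma~\ref{lem:kl} or because first-order dominance gives $\eta_{y+1}(X)\geq\eta_{y}(X)$), so the two VaR anchors coincide and the inequality collapses to a single application of Lemma~\ref{lem:dec}. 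Your version is sharper and more transparent: it explains exactly why the threshold $1-\Prby(x=X)$ appears (it is the condition under which the $\alpha$-quantile of the sell loss sits at the cheapest state), whereas the paper's worst-case bound obscures this and formally covers a case ($l\neq k$) that, by your observation, cannot occur under the stated hypothesis. Both arguments rest on the same unstated monotonicity of $c(\cdot,2)$ in the state (implicit in the paper's displayed tail sums and in its claim that $c(l,2)-c(k,2)\geq 0$), so you are on equal footing there. One small nit: for $\alpha=1$ the left derivative is $1-1/\alpha=0$ rather than strictly negative, but $z=c(X,2)$ remains a minimizer, so your conclusion is unaffected.
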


\begin{proof}
From the definitions of $H^{\alpha}(y,2)$ and $H^{\alpha}(y+1,2)$ we have,
\begin{align} \label{eq:h1}
%\begin{split}
H^{\alpha}(y,2) - H^{\alpha}(y+1,2) &= c(l,2) - c(k,2) + \nonumber \\ 
 \frac{1}{\alpha} \sul \eta_{y}(i) (c(i,2)-c(l,2)) + 
 &\frac{1}{\alpha} \sk \eta_{y+1}(i) (c(k,2)-c(i,2))  \nonumber \\
 &\geq c(l,2) - c(k,2) +  \nonumber \\
  \frac{1}{\alpha} \sul \eta_{y}(i) (c(i,2)-c(l,2)) + 
  &\frac{1}{\alpha} \sk \eta_{y}(i) (c(k,2)-c(i,2)) 
% \end{split}
\end{align}
Equation \eqref{eq:h1} follows from lemma \ref{lem:dec} and can be simplified as
\begin{align*} 
%\begin{split}
H^{\alpha}(y,2) - H^{\alpha}(y+1,2) &\geq c(l,2) - c(k,2) + \nonumber \\
 \frac{1}{\alpha} \sul \eta_{y}(i) (c(k,2)-c(l,2)) +  
&\frac{1}{\alpha} \skl \eta_{y}(i) (c(k,2)-c(i,2)) \nonumber \\
&\geq c(l,2) - c(k,2) - \frac{1}{\alpha} \Gamma' \eta_{y}
%\end{split}
\end{align*}
where $\Gamma$ is such that $\Gamma_{i} = c(l,2) - c(k,2)$ for $i=1,\hdots,l-1$ and $\Gamma_{i} = c(i,2) - c(k,2)$ for $i=l,\hdots k-1$. Clearly, $\Gamma_{i}\geq 0$ and decreasing. Right hand side of inequality attains its maximum when $k=X$ and $l=1$ and $\Gamma_{i} = c(l,2) - c(k,2)$ for all $i$. Therefore, we have
\begin{align}
H^{\alpha}(y,2) - H^{\alpha}(y+1,2) &\geq  c(l,2) - c(k,2) - \frac{1}{\alpha} \Gamma' \eta_{y} \nonumber \\
&\hspace{-3.5cm} \geq (c(l,2) - c(k,2)) - \frac{1}{\alpha} (c(l,2) - c(k,2)) (1-\Prby(x=X)) \nonumber 
\end{align}
After rearrangement we have,
\begin{multline*}
H^{\alpha}(y,2) - H^{\alpha}(y+1,2) \\ \geq  \frac{\alpha - (1-\Prby(x=X))}{\alpha} (c(l,2) - c(k,2))  \nonumber 
\end{multline*}
Since $\alpha \geq 1 - \Prby(x=X)$ and $(c(l,2) - c(k,2))\geq0$ (follows from lemma \ref{lem:kl} and assumption (A2)), we have $H^{\alpha}(y,2) \geq H^{\alpha}(y+1,2)$.
\end{proof}

\vspace{-0.1cm}
From Lemma \ref{lem:z} and  \eqref{eq:hya}, we have 
\begin{align*}
H^{\alpha}(y,1) = c(l,1) + \frac{1}{\alpha} \slx \eta_{y}(i) (c(i,1)-c(l,1)), \\
H^{\alpha}(y+1,1) = c(k,1) + \frac{1}{\alpha} \skx \eta_{y+1}(i) (c(i,1)-c(k,1)) \\
\end{align*}
%\vspace{0cm}
\begin{lemma} \label{lem:hy2}
$H^{\alpha}(y+1,1) \geq H^{\alpha}(y,1)$ if $\alpha \geq 1 - \Prbyy(x=X)$.
\end{lemma}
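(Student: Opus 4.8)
The plan is to prove Lemma~\ref{lem:hy2} by mirroring the argument used for Lemma~\ref{lem:hy1}, with the roles of the low-cost and high-cost states (and correspondingly the observations $y$ and $y+1$) interchanged, since for action~$1$ the cost $c(\cdot,1)$ runs in the opposite direction to $c(\cdot,2)$. First I would start from the two explicit expressions for $H^{\alpha}(y,1)$ and $H^{\alpha}(y+1,1)$ displayed just before the statement (which come from Lemma~\ref{lem:z} and \eqref{eq:hya}) and form the difference $H^{\alpha}(y+1,1)-H^{\alpha}(y,1)$. This isolates a leading term $c(k,1)-c(l,1)$ together with the two tail sums $\frac{1}{\alpha}\sum_{i=k+1}^{X}\eta_{y+1}(i)(c(i,1)-c(k,1))$ and $-\frac{1}{\alpha}\sum_{i=l+1}^{X}\eta_{y}(i)(c(i,1)-c(l,1))$, where $l$ and $k$ are the VaR indices for $\eta_{y}$ and $\eta_{y+1}$ supplied by Lemma~\ref{lem:z}, and $k\geq l$ by Lemma~\ref{lem:kl}.

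Next I would bring both tail sums onto a common belief. Since $\eta_{y+1}\geq_{s}\eta_{y}$ (the first-order dominance noted right after Lemma~\ref{lem:z}) and the vector $w$ with $w_{i}=c(i,1)-c(l,1)$ for $i>l$ and $w_{i}=0$ otherwise is non-decreasing, Lemma~\ref{lem:inc} gives $w'\eta_{y+1}\geq w'\eta_{y}$; replacing $\eta_{y}$ by $\eta_{y+1}$ in the negative tail sum therefore only decreases the expression and produces a valid lower bound. With both sums now weighted by $\eta_{y+1}$, I would use $k\geq l$ to split $\sum_{i=l+1}^{X}=\sum_{i=l+1}^{k}+\sum_{i=k+1}^{X}$ and telescope the two sums into the single form $c(k,1)-c(l,1)-\frac{1}{\alpha}\Gamma'\eta_{y+1}$, where $\Gamma_{i}=c(i,1)-c(l,1)$ on $l<i\leq k$, $\Gamma_{i}=c(k,1)-c(l,1)$ on $i>k$, and $\Gamma_{i}=0$ otherwise. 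By the monotonicity of $c(\cdot,1)$ each $\Gamma_{i}$ is non-negative and bounded above by $c(k,1)-c(l,1)$.

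Finally I would bound $\Gamma'\eta_{y+1}$ by its worst case exactly as in Lemma~\ref{lem:hy1}: factor out the gap $c(k,1)-c(l,1)$, which is non-negative by Lemma~\ref{lem:kl} together with (A2), and bound the residual probability carried by the support of $\Gamma$ by $1-\Prbyy(x=X)$, the maximal mass that support can hold once the single extremal state is dropped. This yields $H^{\alpha}(y+1,1)-H^{\alpha}(y,1)\geq \frac{\alpha-(1-\Prbyy(x=X))}{\alpha}\,(c(k,1)-c(l,1))$, which is non-negative precisely under the hypothesis $\alpha\geq 1-\Prbyy(x=X)$, completing the proof.

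The main obstacle I anticipate is the combinatorial bookkeeping rather than any deep inequality. Two choices must be made consistently: keeping $\eta_{y+1}$ (not $\eta_{y}$) after the stochastic-dominance substitution, which is what makes the boundary probability $\Prbyy$ rather than $\Prby$; and correctly identifying which single extremal state is excluded from the support of $\Gamma$ after the $\sum_{i=l+1}^{k}$/$\sum_{i=k+1}^{X}$ split, since this is exactly what pins down the constant $1-\Prbyy(x=X)$ in the threshold and distinguishes this case from Lemma~\ref{lem:hy1}.
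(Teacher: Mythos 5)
Your proposal reconstructs the paper's proof of Lemma~\ref{lem:hy2} step for step: the same initial decomposition of $H^{\alpha}(y+1,1)-H^{\alpha}(y,1)$, the same application of Lemma~\ref{lem:inc} with a non-decreasing weight vector to replace $\eta_{y}$ by $\eta_{y+1}$ in the subtracted tail sum, the same telescoping into $c(k,1)-c(l,1)-\frac{1}{\alpha}\Gamma'\eta_{y+1}$ with an identical $\Gamma$ (the paper calls it $\Delta$), and the same worst-case bound at the end. In substance this is the paper's argument.

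One caveat, and it sits exactly at the step you yourself flag as the delicate one. The support of your $\Gamma$ is $\{l+1,\dots,X\}$: it \emph{contains} state $X$ and \emph{excludes} states $1,\dots,l$. So the ``drop one extremal state'' bound that this support actually delivers is $\Gamma'\eta_{y+1}\leq \bigl(c(k,1)-c(l,1)\bigr)\bigl(1-\Prbyy(x=1)\bigr)$ --- the excluded extremal state is $1$, not $X$. This is the mirror image of Lemma~\ref{lem:hy1}, where the support $\{1,\dots,k-1\}$ genuinely excludes $X$ and the constant $1-\Prby(x=X)$ follows. Your write-up asserts the constant $1-\Prbyy(x=X)$ so as to match the lemma statement, but the mechanism you describe does not produce it; carried out literally, your bookkeeping yields the sufficient condition $\alpha\geq 1-\Prbyy(x=1)$. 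To be fair, the paper's own proof has precisely the same looseness (it writes $1-\Prbyy(x=X)$ after asserting the maximum occurs at $k=X$, $l=1$, without identifying which state drops out), so you have matched the paper, gap included --- but since you named this as the main obstacle, you should resolve it explicitly rather than by fiat.
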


\begin{proof}
From the definitions of $H^{\alpha}(y+1,1)$ and $H^{\alpha}(y,1)$ we have,
\begin{align} \label{eq:ha2}
H^{\alpha}(y+1,1) - H^{\alpha}(y,1) &= c(k,1) - c(l,1) + \nonumber \\ 
\hspace{0cm}\frac{1}{\alpha} \skx \eta_{y+1}(i) (c(i,1)-c(k,1))   
& - \frac{1}{\alpha} \slx \eta_{y}(i) (c(i,1)-c(l,1))  \nonumber \\
 &\hspace{-4cm}\geq c(k,1) - c(l,1) 
+  \frac{1}{\alpha} \skx \eta_{y+1}(i) (c(i,1)-c(k,1))
\nonumber \\ & \hspace{-2cm}\ - \frac{1}{\alpha} \slx \eta_{y+1}(i) (c(i,1)-c(l,1))
\end{align}
 Equation \eqref{eq:ha2} follows from lemma \ref{lem:inc} and can be simplified as
\begin{align*}
%\begin{split}
H^{\alpha}(y+1,1) - H^{\alpha}(y,1) &\geq c(k,1) - c(l,1)  \\ 
& +  \frac{1}{\alpha} \skx \eta_{y+1}(i) (c(i,1)-c(k,1))  \\
&  - \frac{1}{\alpha} \slx \eta_{y+1}(i) (c(i,1)-c(l,1))\\
& \geq c(k,1) - c(l,1) - \frac{1}{\alpha} \Delta' \eta_{y+1}
%\end{split}
\end{align*}
where $\Delta$ is such that $\Delta_{i} = c(i,1) - c(l,1)$ for $i=l,\hdots,k$ and $\Delta_{i} = c(k,1) - c(l,1)$ for $i=k+1,\hdots X$. Clearly, $\Delta_{i}\geq 0$ and decreasing. Right hand side of inequality  attains its maximum when $k=X$ and $l=1$ and $\Delta_{i} = c(k,1) - c(l,1)$ for all $i$. Therefore, we have
\begin{multline*}
 H^{\alpha}(y+1,1) - H^{\alpha}(y,1) \\ \geq (c(k,1) - c(l,1)) 
 - \frac{1}{\alpha} (c(k,1) - c(l,1)) (1-\Prbyy(x=X)) \nonumber 
%\end{split} \\
\end{multline*}
After rearrangement we have,
\begin{multline}
H^{\alpha}(y+1,1) - H^{\alpha}(y,1)  \\ \geq \frac{\alpha - (1-\Prbyy(x=X))}{\alpha} (c(k,1) - c(l,1))  
\end{multline}
Since $\alpha \geq 1 - \Prbyy(x=X)$ and $c(k,1) - c(l,1)\geq0$ (follows from lemma \ref{lem:kl} and assumption (A2)), we have $H^{\alpha}(y+1,1) \geq H^{\alpha}(y,1)$.
\end{proof}

\begin{lemma} \label{lem:main}
Let $\alpha \geq (1-\Prby(x=X))$. The function $H^{\alpha}(y,a)$ satisfies the single crossing condition i.e, 
\begin{equation*}
(H^{\alpha}(y,1) - H^{\alpha}(y,2)) \geq 0 \Rightarrow (H^{\alpha}(y+1,1) - H^{\alpha}(y+1,2)) \geq 0
\end{equation*}
\end{lemma}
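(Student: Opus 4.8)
The plan is to obtain the single crossing condition directly by combining the two monotonicity estimates already established, Lemma \ref{lem:hy1} and Lemma \ref{lem:hy2}, rather than re-deriving anything from the CVaR minimization. The key observation is that those two lemmas control the two actions in \emph{opposite} directions: as $y$ increases by one, the CVaR cost of action $a=1$ can only increase while the CVaR cost of action $a=2$ can only decrease. Hence the difference $H^{\alpha}(y,1)-H^{\alpha}(y,2)$ is nondecreasing in $y$, and a single crossing is then immediate.

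First I would reconcile the two $\alpha$-thresholds so that the single hypothesis $\alpha \geq 1-\Prby(x=X)$ suffices to invoke both lemmas. Lemma \ref{lem:hy1} is stated under exactly this assumption, but Lemma \ref{lem:hy2} requires the a priori stronger bound $\alpha \geq 1-\Prbyy(x=X)$. Here I would use the fact recorded just after Lemma \ref{lem:z}, namely $\eta_{y+1}\geq_{r}\eta_{y}$ and therefore $\eta_{y+1}\geq_{s}\eta_{y}$. First-order stochastic dominance evaluated at $j=X$ gives $\Prbyy(x=X)=\eta_{y+1}(X)\geq \eta_{y}(X)=\Prby(x=X)$, so that $1-\Prbyy(x=X)\leq 1-\Prby(x=X)\leq \alpha$. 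Thus the hypothesis of Lemma \ref{lem:main} automatically entails the hypothesis of Lemma \ref{lem:hy2}, and both monotonicity results are available.

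With both statements in hand, the conclusion follows by addition. Lemma \ref{lem:hy2} gives $H^{\alpha}(y+1,1)\geq H^{\alpha}(y,1)$ and Lemma \ref{lem:hy1} gives $-H^{\alpha}(y+1,2)\geq -H^{\alpha}(y,2)$; summing these and then invoking the hypothesis $H^{\alpha}(y,1)-H^{\alpha}(y,2)\geq 0$ yields
\begin{equation*}
H^{\alpha}(y+1,1)-H^{\alpha}(y+1,2)\geq H^{\alpha}(y,1)-H^{\alpha}(y,2)\geq 0,
\end{equation*}
which is precisely the single crossing condition.

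I do not anticipate a genuine obstacle at this stage, since the analytic heavy lifting was already carried out in Lemmas \ref{lem:hy1} and \ref{lem:hy2}. The one point demanding care is exactly the reconciliation above: one must not merely assume both lemmas apply under the stated hypothesis, but deduce $\Prbyy(x=X)\geq\Prby(x=X)$ from stochastic dominance in order to collapse the two conditions into the single bound $\alpha\geq 1-\Prby(x=X)$. Once the single crossing condition is established, the increasing-in-$y$ property of $a^{*}(\pi,y)=\underset{a}{\text{argmin}}\,H^{\alpha}(y,a)$ asserted in Theorem \ref{thm:a} follows directly from \eqref{eq:sgc}.
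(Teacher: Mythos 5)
Your proof is correct and takes essentially the same route as the paper's: both arguments chain Lemma \ref{lem:hy1} (action $2$'s CVaR cost decreases in $y$) with Lemma \ref{lem:hy2} (action $1$'s CVaR cost increases in $y$) to conclude that $H^{\alpha}(y,1)-H^{\alpha}(y,2)$ is nondecreasing, whence single crossing is immediate. Your explicit reconciliation of the two thresholds --- deducing $\Prbyy(x=X)\geq\Prby(x=X)$ from $\eta_{y+1}\geq_{s}\eta_{y}$ so that $\alpha\geq 1-\Prby(x=X)$ alone suffices to invoke Lemma \ref{lem:hy2} --- is a point the paper's proof passes over silently, and it is a worthwhile addition.
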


\begin{proof}
Assume $(H^{\alpha}(y,1) - H^{\alpha}(y,2)) \geq 0$. We have,
\begin{align} \label{thm:1}
H^{\alpha}(y,1) - H^{\alpha}(y,2) &\geq 0 \nonumber \\
\Rightarrow H^{\alpha}(y,1) - H^{\alpha}(y+1,2) &\geq 0
\end{align}
Equation \eqref{thm:1} follows from lemma \ref{lem:hy1}. Also,
\begin{align} \label{thm:2}
H^{\alpha}(y,1) - H^{\alpha}(y+1,2) &\geq 0 \nonumber \\
\Rightarrow H^{\alpha}(y+1,1) - H^{\alpha}(y+1,2) &\geq 0
\end{align}
Equation \eqref{thm:2} follows from Lemma \ref{lem:hy2}. 
\end{proof}

Lemma \ref{lem:main} is a crucial result which helps us to prove Theorem \ref{thm:a} and Theorem \ref{thm:p}.

\textit{Proof of Theorem} \ref{thm:a}: From lemma \ref{lem:main}, $H^{\alpha}(y,a)$ satisfies the single crossing condition and hence is sub-modular in $(y,a)$. Using Theorem \ref{thm:sb}, we get $a^{*}(\pi,y) = {\text{argmin}} H^{\alpha}(y,a)$ is increasing in $y$.

\textit{Proof of Theorem} \ref{thm:p}: From lemma \ref{lem:main}, $H^{\alpha}(y,a)$ satisfies the single crossing condition. It is easily verified that the belief states satisfy the following property
\begin{multline} \label{eq:Hyl}
\{\pi : H^{\alpha}(y,1)-H^{\alpha}(y,2) \geq 0 \} \\ \subseteq 
 \{\pi : H^{\alpha}(y+1,1)-H^{\alpha}(y+1,2) \geq 0 \}
\end{multline}
Equation \eqref{eq:Hyl} says that the curves $\{\pi : H^{\alpha}(y,1)-H^{\alpha}(y,2) = 0 \} ~ \text{for all}~y\in{\mathcal{Y}}$ do not intersect. Also from  \eqref{eq:rpi} and \eqref{eq:Hyl}, it is easily verified that there are at most $Y+1$ local decision likelihood matrices $R^{\pi}$ (can be less than $Y+1$ when $H^{\alpha}(\bar{y},1)-H^{\alpha}(\bar{y},2) > 0 ~ \text{for some~} \bar{y} \in \mathcal{Y}, ~ \text{for all} ~ \pi$). The matrices $R^{\pi}$ from \eqref{eq:rpi} and Theorem \ref{thm:a} are constant on each of the $Y+1$ polytopes.

% you can choose not to have a title for an appendix
% if you want by leaving the argument blank

%Appendix two text goes here.

% use section* for acknowledgment
%\section*{Acknowledgment}

%The authors would like to thank...

% Can use something like this to put references on a page
% by themselves when using endfloat and the captionsoff option.
\ifCLASSOPTIONcaptionsoff
  \newpage
\fi

% trigger a \newpage just before the given reference
% number - used to balance the columns on the last page
% adjust value as needed - may need to be readjusted if
% the document is modified later
%\IEEEtriggeratref{8}
% The "triggered" command can be changed if desired:
%\IEEEtriggercmd{\enlargethispage{-5in}}

% references section

% can use a bibliography generated by BibTeX as a .bbl file
% BibTeX documentation can be easily obtained at:
% http://www.ctan.org/tex-archive/biblio/bibtex/contrib/doc/
% The IEEEtran BibTeX style support page is at:
% http://www.michaelshell.org/tex/ieeetran/bibtex/
\bibliographystyle{IEEEtran}
% argument is your BibTeX string definitions and bibliography database(s)
%\bibliography{references}
\bibliography{$HOME/styles/bib/vkm}
\end{document}